\documentclass[11pt,oneside]{amsart}

\usepackage{amsmath,ifthen, amsfonts, amssymb, amsopn}

\usepackage{graphicx}

\newcommand{\showcomments}{yes}

\newsavebox{\commentbox}
%
{\ifthenelse{\equal{\showcomments}{yes}}%
{\footnotemark
        \begin{lrbox}{\commentbox}
        \begin{minipage}[t]{1.25in}\raggedright\sffamily\tiny
        \footnotemark[\arabic{footnote}]}
{\begin{lrbox}{\commentbox}}}%
{\ifthenelse{\equal{\showcomments}{yes}}%
{\end{minipage}\end{lrbox}\marginpar{\usebox{\commentbox}}}
{\end{lrbox}}}

\newtheorem{thm}{Theorem}[section]
\newtheorem{lem}[thm]{Lemma}

\newtheorem{cor}[thm]{Corollary}

\newtheorem{prop}[thm]{Proposition}

\theoremstyle{definition}
\newtheorem{defn}[thm]{Definition}
\newtheorem{rem}[thm]{Remark}
\newtheorem{exmp}[thm]{Example}

\newtheorem{question}[thm]{Question}

\newcommand{\field}[1]{\mathbb{#1}}
\newcommand{\integers}{\ensuremath{\field{Z}}}

\newcommand{\naturals}{\ensuremath{\field{N}}}
\newcommand{\reals}{\ensuremath{\field{R}}}
\newcommand{\Euclidean}{\ensuremath{\field{E}}}

\newcommand{\size}[1]{\ensuremath{\vert #1 \vert}}

\setlength{\textwidth}{5.8in}
\setlength{\textheight}{7.5in}
\hoffset=-.67in
\voffset=.75 in

\begin{document}

\title[Isometries of $CAT(0)$ cube complexes are semi-simple]{Isometries of $CAT(0)$ cube complexes are semi-simple}

\author[F.~Haglund]{Fr\'{e}d\'{e}ric Haglund}
           \address{
Laboratoire de Math\'{e}matiques\
Universit\'{e} de Paris XI (Paris-Sud)\\
91405 Orsay\\
FRANCE\\}
           \email{frederic.haglund\@math.u-psud.fr}

\subjclass[2000]{20F65, 20F67, (53C21, 20F18)}
\keywords{CAT(0) Cube Complexes, Spaces with walls, Baumslag-Solitar Groups}
\date{\today}

\begin{abstract}
We show that an automorphism of an arbitrary $CAT(0)$ cube complex either has a fixed point or preserves some combinatorial axis. It follows that when a group contains a distorted cyclic subgroup, it admits no proper action on a discrete space with walls.   
\end{abstract}

\maketitle

\tableofcontents

\section{Introduction.}

The notion of {\em a space with walls} was introduced in
\cite{HaglundPaulin98}. We intended to develop a common geometric langage
for various classical combinatorial structures, like the Davis-Moussong
complex of a Coxeter group (see \cite{Davis83}, \cite{Moussong88}), simply-connected
polygonal complexes all of whose polygons have an even number of sides or
$CAT(0)$ cube complexes. A space with walls is a set $V$ together with a
collection $\mathcal H$ of separating objects: the walls. It is required
that two points are separated by finitely many walls.

 This notion was generalized in \cite{CherixMartinValette03} where the definition of a 
{\em space with measured walls} was given. Here there is a measure on the
set of walls such that the measure of the set of walls separating two
given points is finite. If a group $G$ admits a proper action on a
space with measured walls then it has the Haagerup property
(\cite{CherixMartinValette03}[Proposition 1]). At the end of their
paper the authors of \cite{CherixMartinValette03} asked :

\begin{question}\label{quest:discrete}
\center
 Does  a discrete group acting properly on a space with measured walls
necessarily admit a proper action on a {\em discrete} space with walls ?
\end{question}

 The Baumslag-Solitar group $BS(m,n)$ with parameters $m,n\in\naturals^*$
has the following presentation:

$$BS(m,n):=\ < \ a,b\ \vert\ ba^mb^{-1}=a^n\ >$$

The groups $BS(m,n)$ all act
properly on a space with measured walls. We briefly recall such an action. First we have the action on the (locally finite) Bass-Serre tree $T_{m,n}$ of $BS(m,n)$ seen as the $HNN$-extension of the cyclic group generated by $a$ by the isomorphism $b$ sending the subgroup $\langle a^m\rangle$ onto the subgroup $\langle a^n\rangle$. This gives an action on a simplicial tree, thus a discrete space with walls. But this action is not proper since $a$ fixes a vertex. Now we can also see $a$ as a unit translation of the real line $\reals$, and $b$ as a well-choosen non trivial homothety of $\reals$. For example using matrices we may let $A=\begin{pmatrix}1&1\cr 0&1\cr \end{pmatrix}, B=\begin{pmatrix}{\rm e}^\beta&0\cr 0&{\rm e}^{-\beta}\cr \end{pmatrix}$ with $({\rm e}^\beta)^2=\frac{n}{m}$, then $a\mapsto A,b\mapsto B$ induces a representation of $BS(m,n)$ into $PSL(2,\reals)$, the isometry group of $\field H^2$. Thus $BS(m,n)$ is represented as a parabolic group of isometries of $
 \field H^2$, in such a way that the subgroup $\langle a\rangle$ acts properly. Now $\field H^2$ has a natural $PSL(2,\reals)$-invariant structure of space with measured walls (see section 3 in \cite{CherixMartinValette03}), and it is readily seen that the action of $BS(m,n)$ on the product space with measured walls $T_{m,n}\times \field H^2$ is proper  (A. Valette explained this to me in an oral communication, see also \cite{GalJanuszkiewicz03})
\bigskip

 In this paper we answer in the negative to Question~\ref{quest:discrete}
by proving the following:
\begin{thm}\label{thm:BSnoproperwall}

 For $m\neq n$ the group $BS(m,n)$ has no proper  action on a (discrete)
space with walls.

\end{thm}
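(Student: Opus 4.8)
The plan is to deduce Theorem~\ref{thm:BSnoproperwall} from the semi-simplicity statement in the title by passing from the space with walls to its dual $CAT(0)$ cube complex, and then exploiting the distortion of $\langle a\rangle$ in $BS(m,n)$ through the combinatorial translation length.

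First, suppose for contradiction that $G=BS(m,n)$ acts properly on a discrete space with walls $(V,\mathcal H)$. I would invoke the standard dual cube complex construction: from $(V,\mathcal H)$ one builds the $CAT(0)$ cube complex $\canon{V}{\mathcal H}$ whose $0$-cubes are the coherent ultrafilters on the set of halfspaces (those satisfying the descending chain condition), and on which $\Aut(V,\mathcal H)$, hence $G$, acts by automorphisms. Each point $v\in V$ yields a principal vertex $\hat v$, with $g\hat v=\widehat{gv}$, and the combinatorial ($\ell^1$) distance satisfies $d(\hat v,\widehat{gv})=\#\{\text{walls separating } v \text{ and } gv\}$; that is, on principal vertices it agrees with the wall pseudo-metric. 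Hence properness of $G\curvearrowright(V,\mathcal H)$ transfers to metric properness of $G\curvearrowright\canon{V}{\mathcal H}$: for every vertex $w$ and every $R$ the set $\{g\in G: d(w,gw)\le R\}$ is finite. In particular every vertex, and hence every cube, stabiliser is finite.

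Next I apply the main theorem to the automorphism of $\canon{V}{\mathcal H}$ induced by $a$. Since $a$ has infinite order while all cube stabilisers are finite, $a$ cannot fix a point: an elliptic $a$ would stabilise a cube, so a finite-index, still infinite, subgroup of $\langle a\rangle$ would fix a vertex, contradicting finiteness of the stabilisers. By semi-simplicity, $a$ therefore preserves a combinatorial axis $L$, acting on it as translation by some integer $\ell(a)\ge 1$. Consequently the combinatorial translation length $\ell(g):=\inf_w d(w,gw)$ is positive on $a$, is realised along $L$, and satisfies $\ell(a^k)=\size{k}\,\ell(a)$ for all $k$, since $a^k$ translates $L$ by $k\,\ell(a)$ and this value already attains the infimum.

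Finally I use that $\ell$ is a conjugacy invariant: from $d(w,ghg^{-1}w)=d(g^{-1}w,hg^{-1}w)$ one gets $\ell(ghg^{-1})=\ell(h)$. Applying this to the defining relation $ba^mb^{-1}=a^n$ yields
\[
n\,\ell(a)=\ell(a^n)=\ell(ba^mb^{-1})=\ell(a^m)=m\,\ell(a).
\]
As $\ell(a)\ge 1>0$, this forces $m=n$, contradicting the hypothesis $m\neq n$. (Equivalently, the relation forces $\langle a\rangle$ to be exponentially distorted via $a^{n^k}=b^k a^{m^k}b^{-k}$, whereas a proper action with an axial $a$ makes $\langle a\rangle$ undistorted.) The step I expect to require the most care is the faithful transfer of \emph{properness} through $\canon{V}{\mathcal H}$: one must check that principal ultrafilters are permuted among themselves with $\ell^1$-distance exactly the wall metric, and that metric properness at a single vertex propagates to finiteness of all cube stabilisers. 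Granting the semi-simplicity theorem, the translation-length computation itself is then immediate.
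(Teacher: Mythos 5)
Your reduction from the wall space to the dual cube complex, your transfer of properness, and your final translation-length computation are all sound, but there is one genuine gap: you invoke the semi-simplicity theorem (Theorem~\ref{thm:classif}) for the automorphism induced by $a$ without checking its hypothesis that $a$ acts \emph{stably without inversion}. This hypothesis is not a formality: an automorphism inverting a hyperplane is neither combinatorially elliptic nor combinatorially hyperbolic (Example~\ref{exmp:inversion}, the edge swap), and an order-four rotation of a square shows that inversion-freeness of $a$ alone does not suffice --- every power of $a$ must be inversion-free. Nothing in the dual-complex construction prevents $a$ from inverting a hyperplane of $\canon{V}{\mathcal H}$, so as written your step ``by semi-simplicity, $a$ preserves a combinatorial axis'' can fail. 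The repair is standard and is exactly why the paper proves Lemma~\ref{lem:subdinversion}: replace the dual complex $X$ by its cubical subdivision $X'$. Every automorphism of $X$ acts without inversion on $X'$, so the whole group acts without inversion there; in particular every power of $a$ does, i.e.\ $a$ acts stably without inversion on $X'$. Vertices of $X$ remain vertices of $X'$ with combinatorial distances doubled, so metric properness persists, $\ell(a)\ge 1$ still holds, and your identity $n\,\ell(a)=\ell(ba^mb^{-1})=m\,\ell(a)$ goes through verbatim on $X'$. Note also that the homogeneity $\ell(a^k)=\vert k\vert\,\ell(a)$ you assert is precisely Corollary~\ref{cor:ifaxis}, which again presupposes the no-inversion hypothesis.

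With that repaired, your endgame is a genuinely different --- and shorter --- route than the paper's. The paper deduces Theorem~\ref{thm:BSnoproperwall} from Theorem~\ref{thm:distortednopropercube}: from $a^{n^k}=b^ka^{m^k}b^{-k}$ the subgroup $\langle a\rangle$ is distorted, and then $\delta(a^n)=n\delta(a)$ together with $\delta(a^n)\le k_n\max_{s\in S}\delta(s)$, where $k_n=\vert a^n\vert_S$ grows sublinearly, forces $\delta(a)=0$, so $a$ fixes a vertex and properness fails. You instead use conjugation invariance of $\delta$ plus homogeneity to get $n\,\delta(a)=m\,\delta(a)$ directly; this is exactly the paper's ``straightforward'' finite-dimensional argument of Section 3 (there carried out with the $CAT(0)$ length $\delta^0$) transplanted to the combinatorial translation length, which the classification theorem legitimizes without any dimension restriction. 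What the paper's detour through distortion buys is the stronger Theorem~\ref{thm:distortednopropercube} (valid for any group with a distorted cyclic subgroup) and Corollary~\ref{cor:HBGnopropercube}; what yours buys is brevity for $BS(m,n)$ itself. Your flagged worry about propagating properness resolves as you hoped: on principal vertices the combinatorial metric agrees with the wall metric, and since the dual complex is connected, finiteness of $\{g: d(\hat v,g\hat v)\le R\}$ at one principal vertex implies it at every vertex, hence finiteness of all vertex stabilizers.
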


 We want to show that for every action of $BS(m,n),m\neq n$ on a space
with walls $(V,{\mathcal  H})$, all orbits of the infinite cyclic group
generated by $b$ are bounded. To do so we consider the $CAT(0)$ cube
complex $X$ naturally associated to $(V,{\mathcal  H})$ and the induced
action of $BS(m,n)$ on it. This complex was introduced simultaneously in
\cite{ChatterjiNiblo04, NicaCubulating04},  thus generalizing the
previous constructions of  \cite{Sageev95} and \cite{WiseSmallCanCube04}.

 Let us review some elementary properties of the $CAT(0)$ cube complex
$X$ (see \cite{ChatterjiNiblo04, NicaCubulating04} for details). There is an embedding $V\to X^0$ which is an isometry when $V$ is
equipped with the wall distance (so $d(v,v')$ is the number of walls
separating $\{v,v'\}$) and $X^0$ is equipped with the combinatorial
distance of the 1-skeleton $X^1$. An automorphism $f$ of the space with
walls  $(V,{\mathcal  H})$ extends to an automorphism $\bar f$ of $X$.
The property of having  bounded orbits is equivalent for $f$ or for $\bar
f$. And the map $f\mapsto\bar f$ is an injective extension morphism ${\rm
Aut}(V,{\mathcal  H})\to {\rm Aut}(X)$. It follows that
Theorem~\ref{thm:BSnoproperwall} is a consequence of its analogue for
$CAT(0)$ cube complexes:

\begin{thm}\label{thm:BSnopropercube}

 For $m\neq n$ the group $BS(m,n)$ has no proper  action on a $CAT(0)$
cube complex.

\end{thm}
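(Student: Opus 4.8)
The plan is to show that the generator $b$ of $BS(m,n)$ must act with bounded orbits in any action on a $CAT(0)$ cube complex $X$, because the relation $ba^mb^{-1}=a^n$ forces $a$ to be distorted in a way incompatible with $b$ acting as a hyperbolic isometry. The key structural input I would invoke is the main result advertised in the abstract: every automorphism of a $CAT(0)$ cube complex is \emph{semi-simple}, meaning it either fixes a point (combinatorially) or preserves a combinatorial axis on which it translates by a positive integer amount $\ell(g)$, the combinatorial translation length. So my strategy is to run a translation-length argument on the abelian-by-abelian structure of $BS(m,n)$.

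\textbf{Step 1: Extract the distortion relation.} From $ba^mb^{-1}=a^n$ one gets by induction $b^k a^{m^k} b^{-k} = a^{n^k}$, so $a^{n^k}$ is conjugate to $a^{m^k}$. The plan is to use the fact that combinatorial translation length is a conjugacy invariant and is homogeneous, i.e. $\ell(g^j)=|j|\,\ell(g)$ for the integer-valued combinatorial length, to derive $m^k\,\ell(a) = \ell(a^{m^k}) = \ell(b^k a^{m^k} b^{-k}) = \ell(a^{n^k}) = n^k\,\ell(a)$ for every $k$. Since $m\neq n$, this equality across all $k$ forces $\ell(a)=0$, so $a$ cannot be a hyperbolic isometry; by semi-simplicity, $a$ must have a fixed point.

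\textbf{Step 2: Rule out $b$ being hyperbolic.} Now suppose for contradiction that the action is proper. Since $a$ has bounded orbits (indeed a fixed point), for properness to have any chance the generator $b$ must move points off to infinity. Here I would exploit that in $BS(m,n)$ the element $a$ is conjugated into smaller and smaller ``scale'': $b^{-k}ab^k$ relates $\langle a\rangle$ to itself via the $m$-vs-$n$ discrepancy, producing arbitrarily large powers of $a$ lying in a bounded region. Concretely, the orbit of a fixed point $x$ of $a$ under the group contains, for each $k$, points of the form $b^k x$, while the commutation relation pins the $a$-translates of these points to lie within a controlled distance; combining this with the semi-simple normal form for $b$ (a combinatorial axis) yields points that are simultaneously far apart (because $b$ is hyperbolic) yet moved a bounded amount by a high power of $a$ — contradicting properness, since properness bounds the number of group elements moving any given point a bounded distance.

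\textbf{Main obstacle.} The delicate point, and where I expect the real work to be, is Step 2: translating the algebraic distortion into a genuine geometric contradiction requires controlling how the combinatorial axis of $b$ interacts with the fixed-point set of $a$. The cleanest route is probably to argue that $a$ fixes a point $x$, that the points $b^k x$ march off along the axis of $b$, and that the relation forces a large power $a^{n^k}$ to move $b^k x$ by the same bounded amount that $a^{m^k}$ moves $x$; if $a$ acts nontrivially on any $b^k x$ this manufactures infinitely many distinct group elements (the powers of $a$ conjugated by powers of $b$) displacing a fixed bounded region by a bounded amount, violating properness. If instead $a$ fixes every $b^kx$, then $a$ fixes the whole axis of $b$ and one gets an abelian-type obstruction directly. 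Making the ``bounded displacement of a bounded region'' estimate precise — and ensuring the infinitely many conjugates $b^{-k}a^{\,\cdot}b^k$ are genuinely distinct group elements in $BS(m,n)$ for $m\neq n$ — is the crux, and is exactly where the hypothesis $m\neq n$ must be used.
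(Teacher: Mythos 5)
Your Step~1 is correct and is, in substance, the paper's own argument, modulo one technicality you gloss over: the homogeneity $\ell(g^{j})=j\,\ell(g)$ (Corollary~\ref{cor:ifaxis}, Corollary~\ref{cor:delta}) is proved only for automorphisms acting \emph{stably without inversion}, and elements of $BS(m,n)$ acting on $X$ may well have inversions. The fix is routine but must be said: pass to the cubical subdivision $X'$, where by Lemma~\ref{lem:subdinversion} every element of the group acts without inversion (hence every element acts stably without inversion), and where fixed points and properness are unchanged. With that in place, conjugation invariance of $\delta$ plus homogeneity already at $k=1$ gives $m\,\ell(a)=\ell(ba^{m}b^{-1})=n\,\ell(a)$, so $\ell(a)=0$; since the combinatorial displacement is a non-negative integer, some vertex $p$ satisfies $d(p,ap)=0$, i.e.\ $a$ fixes a vertex --- you do not even need the full classification for this last step. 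The paper packages the same computation slightly differently, via distortion of $\langle a\rangle$ and subadditivity of $\delta$ over a generating set (Theorem~\ref{thm:distortednopropercube}); your conjugation shortcut is the Section~3 finite-dimensional argument transplanted to the combinatorial length, which is exactly the intended point.

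The genuine flaw is Step~2, which you single out as ``the crux'': it is entirely unnecessary, and its presence signals a misunderstanding of what properness means. In $BS(m,n)$ the element $a$ has infinite order (the group is an HNN extension of $\langle a\rangle\cong\integers$). Once $a$ fixes a vertex $p$, the stabilizer of $p$ contains the infinite subgroup $\langle a\rangle$: infinitely many group elements move $p$ a distance $0$, and this alone contradicts properness. No analysis of whether $b$ is hyperbolic, no interaction between an axis of $b$ and the fixed-point set of $a$, and no ``bounded displacement of a bounded region'' estimate is needed; the proof ends with Step~1. Your closing claim that the hypothesis $m\neq n$ ``must be used'' in Step~2 is likewise misplaced: it is used exactly once, in Step~1, to conclude $(m-n)\,\ell(a)=0$. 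Indeed for $m=n$ the theorem is false --- $BS(1,1)=\integers^{2}$ acts properly on the standard cubulation of the plane --- and it is Step~1, not any axis argument, that the case $m=n$ would break.
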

 Theorem~\ref{thm:BSnopropercube} is easy to prove when the cube complex
is finite dimensional. In order to handle the general case we establish a
classification of automorphisms of arbitrary $CAT(0)$ cube complexes. Our
main result is:

 \begin{thm} \label{thm:classif}
Every automorphism of a $CAT(0)$ cube complex acting stably without
inversion is either  combinatorially elliptic or  combinatorially
hyperbolic. 

\end{thm}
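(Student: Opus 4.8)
The plan is to classify $f$ according to how it permutes the halfspaces of $X$, using the combinatorial metric in which $d(x,y)$ is the number of hyperplanes separating the vertices $x$ and $y$. The relevant invariant is the combinatorial translation length $\ell(f)=\inf_{x\in X^0}d(x,fx)$; since this is an infimum of non-negative integers it is automatically realized by some vertex, so the issue is never the existence of a minimizer but rather what a minimizer tells us. I would call a hyperplane $\hat h$ \emph{skewered} by $f$ if it admits a halfspace $h$ with $fh\subsetneq h$, so that $\cdots\subsetneq f^2h\subsetneq fh\subsetneq h\subsetneq f^{-1}h\subsetneq\cdots$ is a strictly monotone chain. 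The dichotomy I aim to prove is that $f$ is combinatorially hyperbolic precisely when some hyperplane is skewered, and combinatorially elliptic otherwise. The hypothesis that $f$ acts stably without inversion enters exactly here: it guarantees that no power of $f$ exchanges the two halfspaces of a hyperplane, which rules out the reflection-like behaviour that would otherwise form a third class and lets me orient halfspaces $f$-coherently.

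For the hyperbolic case I would start from a skewered hyperplane. Its strictly nested chain of halfspaces forces $d(x,f^nx)\to\infty$, hence $\ell(f)>0$. Taking a minimizing vertex $x$ and a combinatorial geodesic $\gamma$ from $x$ to $fx$, I would form the bi-infinite concatenation $\bigcup_{n\in\integers}f^n\gamma$ and check that it is a geodesic line. The only thing to verify is the absence of backtracking at each junction $f^nx$, i.e.\ that no single hyperplane separates both the pair $\{x,fx\}$ and the pair $\{fx,f^2x\}$; were such a hyperplane to exist, a short median/pushing argument would produce a vertex of strictly smaller displacement, contradicting minimality. Thus the concatenation is a combinatorial axis along which $f$ translates by $\ell(f)$, so $f$ is combinatorially hyperbolic.

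For the elliptic case I would first show that the absence of skewered hyperplanes forces every $f$-orbit to be bounded: with no halfspace pushed strictly inside itself, the hyperplanes separating $x$ from $f^nx$ cannot accumulate (otherwise a pigeonhole/limiting argument on halfspaces would manufacture a skewered chain), so $\sup_n d(x,f^nx)<\infty$. It then remains to upgrade a bounded orbit to a fixed cube. This is the heart of the matter and the place where infinite-dimensionality is a genuine obstacle: the natural fixed point of a bounded orbit is its centre, which need not be a vertex — the combinatorial displacement of an elliptic automorphism can be positive at every vertex — and in the infinite-dimensional setting one cannot invoke compactness, local finiteness, or a finite spanning subcomplex to locate it. I therefore expect the core of the proof to be a purely combinatorial fixed-cube lemma, an $\ell^1$ analogue of the Bruhat--Tits theorem, asserting that an automorphism with bounded combinatorial orbits fixes a cube of $X$; its centre is then the fixed point witnessing that $f$ is combinatorially elliptic.

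The step I expect to be the main obstacle is precisely this last one: establishing the combinatorial fixed-cube property for bounded orbits with no dimension or finiteness hypothesis. I would attack it by organizing the hyperplanes that cross the orbit into an $f$-invariant family and extracting from it a canonical $f$-fixed median point; the delicate part is to prove, using that no halfspace satisfies $fh\subsetneq h$, that this family is effectively finite and admits a well-defined centre. That is exactly the point at which the non-skewering hypothesis has to be converted into the compactness substitute that infinite dimension denies us, and I anticipate it is where the real work of the paper lies.
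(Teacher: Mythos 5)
Your proposal has two genuine gaps, one in each half of the dichotomy. In the hyperbolic case, the claim that geodesicity of the bi-infinite concatenation $\bigcup_{n}f^n\gamma$ reduces to ``absence of backtracking at each junction'' (no hyperplane separating both $\{x,fx\}$ and $\{fx,f^2x\}$) is false. By Theorem~\ref{thm:combgeod} one must exclude \emph{every} hyperplane crossing the concatenation twice, and such a hyperplane can cross $f^i\gamma$ and $f^j\gamma$ with $j-i$ arbitrarily large. The order-four rotation $\rho$ of a square shows the junction test is insufficient: there $d(p,\rho p)=1=\delta(\rho)$ and $d(p,\rho^2p)=2$, so every junction passes your test, yet $d(p,\rho^4p)=0$; the offending hyperplane crosses segments $0$ and $2$, which are not adjacent. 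That $\rho$ acts without inversion but not \emph{stably} so (since $\rho^2$ inverts both hyperplanes) pinpoints where the stable hypothesis must be used: at distant powers of $f$, which your sketch never does. This is exactly the content of the paper's Lemma~\ref{lem:axis}: arguing on the minimal $n$ with $d(p,f^np)<n\delta(f)$, it finds a hyperplane $H$ crossing $\gamma_0$ and $\gamma_{n-1}$, reflects part of the path across $N_H$ using Theorem~\ref{thm:hyperplanes}(\ref{symmetry}) to produce a new minimizer $q$, and shows that otherwise $f^{n-1}$ would have an inversion along the hyperplane $K$ dual to the edge $(q,p)$ --- a contradiction visible only at the power $f^{n-1}$, not at a single junction.

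In the elliptic case you abandon the easy route and substitute an unproven hard one. As you yourself observe, $\delta(f)$ is an attained infimum of non-negative integers; if $\delta(f)=0$ some vertex is fixed, and that is the paper's \emph{definition} of combinatorially elliptic --- the elliptic case is immediate and requires no bounded-orbit analysis. Your alternative route (no skewered halfspace $\Rightarrow$ bounded orbits $\Rightarrow$ fixed cube) leaves both implications unestablished: the first rests on a vague ``pigeonhole/limiting argument,'' and the second --- a combinatorial Bruhat--Tits lemma with no dimension hypothesis --- is a genuinely nontrivial statement that you explicitly leave open, mistakenly anticipating that it is where the paper's real work lies. It is not: the paper never proves or uses any bounded-orbit fixed-point theorem. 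Its real work is Lemma~\ref{lem:axis} and Corollary~\ref{cor:delta}, which convert a displacement minimizer with $\delta(f)>0$ directly into an invariant combinatorial axis. So your skewering dichotomy, while a reasonable picture (and true a posteriori), routes the proof through the one statement the paper is structured to avoid, and omits the argument the paper actually needs.
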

 This means that either the automorphism $f$ fixes a vertex, or $f$
preserves a combinatorial geodesic on which it has a positive translation
length $\delta$, and for any  vertex $v$ in the cube complex we have $d(v,f(v))\ge \delta$.
The condition of acting stably without inversion is always fullfilled in the
cubical subdivision.

In fact, as was suggested to us by C. Dru\c tu, using the fact that $a^{m^k}=b{a^{n^k}}b^{-1}$ in $BS(m,n)$ we see that the subgroup $\langle a\rangle$ is distorted, so we can deduce Theorem~\ref{thm:BSnopropercube} from the following:

\begin{thm}\label{thm:distortednopropercube}
Let $\Gamma$ denote a finitely generated group containing an element $a$ such that the subgroup $\langle a\rangle$ is distorted in $\Gamma$. Then the infinite subgroup $\langle a\rangle$ has a fixed point in every action of
$\Gamma$  on a $CAT(0)$
cube complex. Consequently $\Gamma$ has no proper action on a discrete-space with walls.
\end{thm}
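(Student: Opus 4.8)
The plan is to reduce everything to the classification in Theorem~\ref{thm:classif} and then use distortion to eliminate the hyperbolic case by a displacement estimate. Suppose $\Gamma$ acts by automorphisms on a $CAT(0)$ cube complex $X$. Since the excerpt tells us that acting stably without inversion is automatic on the cubical subdivision, I would first replace $X$ by its subdivision $X'$: this is again a $CAT(0)$ cube complex carrying an action of $\Gamma$ by automorphisms, and a point of $X'$ fixed by $\langle a\rangle$ is in particular a point of $X$ fixed by $\langle a\rangle$. So it suffices to produce a fixed point in $X'$, where I may assume the action is stably without inversion and apply Theorem~\ref{thm:classif} to the single automorphism $a$.

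By that theorem $a$ is either combinatorially elliptic or combinatorially hyperbolic. If $a$ is elliptic it fixes a vertex $v$ of $X'$; then $a^n(v)=v$ for all $n$, so $\langle a\rangle$ fixes $v$ and the first assertion holds. The whole point is therefore to rule out the hyperbolic case. If $a$ were hyperbolic, it would preserve a combinatorial geodesic (axis) $\gamma$ and translate it by some $\delta>0$. Fixing a base vertex $v_0$ on $\gamma$, the vertices $v_0, a(v_0), a^2(v_0),\dots$ lie in order along the geodesic $\gamma$, consecutively spaced by $\delta$, which yields the exact equality $d(v_0,a^n(v_0)) = n\delta$ for all $n\ge 0$.

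Next I would compare this with the word metric on $\Gamma$. Fixing a finite generating set $S$ and setting $C=\max_{s\in S} d(v_0,s\cdot v_0)$, writing $g$ as a geodesic word $s_1\cdots s_k$ with $k=|g|_\Gamma$ and using the triangle inequality together with $\Gamma$-invariance of the metric gives the Lipschitz bound $d(v_0,g\cdot v_0)\le C\,|g|_\Gamma$. Taking $g=a^n$ and combining with the displacement equality yields $n\delta \le C\,|a^n|_\Gamma$, hence $|a^n|_\Gamma \ge (\delta/C)\,n$. Since also $|a^n|_\Gamma \le |a|_\Gamma\, n$, the subgroup $\langle a\rangle$ is quasi-isometrically embedded, i.e.\ undistorted, contradicting the hypothesis. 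This forces the elliptic case, so $\langle a\rangle$ has a fixed point. The one place where genuine content enters (granting the classification theorem) is exactly this clash between the \emph{linear} growth $n\delta$ of the displacement of a hyperbolic automorphism and the \emph{sublinear} growth forced by distortion; everything else is soft bookkeeping, and I expect the only mild subtlety to be keeping the passage to the subdivision honest.

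For the final assertion, I would argue by contradiction: if $\Gamma$ acted properly on a discrete space with walls $(V,\mathcal H)$, then as recalled in the introduction this induces an action of $\Gamma$ by automorphisms on the associated $CAT(0)$ cube complex $X$, with a $\Gamma$-equivariant isometric embedding $V\hookrightarrow X^0$ (wall metric to combinatorial metric). By the first part $\langle a\rangle$ fixes a point $v\in X$, so for a basepoint $w\in V$ the triangle inequality gives $d(w,a^n(w))\le 2\,d(w,v)$ for every $n$. Because $a$ has infinite order, infinitely many distinct elements $a^n$ displace $w$ by this uniformly bounded amount, contradicting properness, which requires $\{g:d(w,g\cdot w)\le R\}$ to be finite for each $R$. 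Hence no such proper action exists.
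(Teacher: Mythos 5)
Your proof is correct and follows essentially the same route as the paper: reduce to an action that is stably without inversion (via the cubical subdivision), invoke the classification theorem, and derive a contradiction between the linear growth $d(v_0,a^n v_0)=n\delta$ of a hyperbolic element's displacement and the sublinear bound $C\,|a^n|_\Gamma$ forced by distortion. Your basepoint-fixed Lipschitz estimate is just the careful form of the paper's one-line inequality $\delta(g)\le\sum_i\delta(s_i)$ (applied there via Corollary~\ref{cor:delta}, which packages both the elliptic and hyperbolic cases as $\delta(a^n)=n\delta(a)$), and you additionally spell out the subdivision step and the final wall-space properness argument that the paper leaves implicit.
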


For example we deduce:

\begin{cor}\label{cor:HBGnopropercube}
Let $H=\langle a,b,c\ \vert\  [a,c]=[a,b]=1,[b,c]=a\rangle$ denote the (discrete) Heisenberg group and let $H\to G$ denote a morphism which is injective on the distorted subgroup $\langle a\rangle$. Then $G$ has no proper  action on a discrete-space with walls.
\end{cor}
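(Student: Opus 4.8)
The plan is to reduce to Theorem~\ref{thm:distortednopropercube} by exhibiting the image of $a$ as a distorted element of a finitely generated subgroup of $G$, and then to transfer the conclusion from that subgroup up to all of $G$. The reason for passing to a subgroup is that $G$ is arbitrary (in particular possibly not finitely generated), whereas Theorem~\ref{thm:distortednopropercube} is stated for finitely generated groups.

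First I would record the distortion phenomenon inside $H$ itself. Since $a$ is central and $bcb^{-1}c^{-1}=a$ (the computation with the opposite commutator convention only changes a sign of the exponent below and is otherwise identical), we have $bcb^{-1}=ac$, hence $bc^nb^{-1}=(ac)^n=a^nc^n$ because $a$ is central. A short induction on the outer exponent then gives $b^mc^nb^{-m}=a^{mn}c^n$, and setting $m=n$ yields
$$b^nc^nb^{-n}c^{-n}=a^{n^2}.$$
Thus $a^{n^2}$ is represented by a word of length $4n$ in the generators $a,b,c$, so the word length of $a^N$ grows like $\sqrt N=o(N)$; this is exactly the statement that $\langle a\rangle$ is (quadratically) distorted in $H$.

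Next, let $\varphi\colon H\to G$ be the given morphism and set $G_0:=\varphi(H)$, the subgroup of $G$ generated by $\varphi(a),\varphi(b),\varphi(c)$; it is finitely generated. Applying $\varphi$ to the identity above gives
$$\varphi(b)^n\varphi(c)^n\varphi(b)^{-n}\varphi(c)^{-n}=\varphi(a)^{n^2}$$
in $G_0$, so $\varphi(a)^{n^2}$ has word length at most $4n$ with respect to $\{\varphi(a),\varphi(b),\varphi(c)\}$. Because $\varphi$ is injective on $\langle a\rangle$ and $a$ has infinite order in the torsion-free group $H$, the element $\varphi(a)$ has infinite order; hence $\langle\varphi(a)\rangle$ is an infinite distorted cyclic subgroup of $G_0$. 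Theorem~\ref{thm:distortednopropercube} then applies to the finitely generated group $G_0$ and shows that $G_0$ admits no proper action on a discrete space with walls.

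Finally I would upgrade this to a statement about $G$. If $G$ acted properly on a discrete space with walls $(V,\mathcal H)$, then restricting the action to $G_0\le G$ would again be proper, since for any $v\in V$ and any $R$ the set $\{g\in G_0:d(gv,v)\le R\}$ is contained in the finite set $\{g\in G:d(gv,v)\le R\}$. This contradicts the previous step, so $G$ has no proper action on a discrete space with walls. I expect the only genuinely delicate point to be the verification that $\varphi(a)$ retains infinite order: this is precisely where the hypothesis that $\varphi$ be injective on $\langle a\rangle$ is used, and without it the distorted element could collapse to something of finite order and the argument would break down; the distortion identity itself and the restriction of a proper action to a subgroup are then routine.
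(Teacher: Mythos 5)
Your proof is correct and follows exactly the route the paper intends: the paper states this corollary without an explicit proof, as an immediate consequence of Theorem~\ref{thm:distortednopropercube}, and your argument supplies precisely the missing details --- the identity $b^nc^nb^{-n}c^{-n}=a^{n^2}$ witnessing quadratic distortion, the passage to the finitely generated image $\varphi(H)\le G$ where injectivity on $\langle a\rangle$ guarantees $\varphi(a)$ has infinite order, and the routine observation that a proper action of $G$ restricts to a proper action of $\varphi(H)$. Nothing further is needed.
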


Note that since the  Heisenberg group $H$ is nilpotent it is amenable and thus acts properly on a space with measured walls (see \cite{CherixMartinValette03}, Theorem 1 (5)). So $H$ itself is an other negative answer to Question~\ref{quest:discrete}. 

Theorem~\ref{thm:classif} says that when $\integers$ acts freely on a $CAT(0)$ cube complex, then there are invariant axes for $\integers$,  each axis being a geodesic isomorphic to the subdivided line. We conclude with the following natural 

\begin{question}
Let $\integers^n$ act freely on a $CAT(0)$ cube complex. Does there exist a  $\integers^n$-invariant combinatorially geodesic subcomplex isomorphic to the standard cubulation of $\Euclidean^n$ ?
\end{question}

\bigskip

In Section~\ref{sec:geometry cube complex}
 we review some fairly classical facts about the geometry of
$CAT(0)$ cube complex, first considered as combinatorial objects by Gromov in \cite{Gromov87}.
Thus people familiar with $CAT(0)$ cube complexes may skip it.
We concentrate on the  combinatorial distance
between vertices and insist on the role of hyperplanes. All the results of this section are classical, except that no assumption of finite dimension is made.

 In Section~\ref{sec: comb trans length} we define the models (elliptic,
hyperbolic) for all inversion-free automorphisms of a $CAT(0)$ cube
complex.

 In Section~\ref{sec:inversion} we generalize to arbitrary $CAT(0)$ cube
complexes the notion of action without inversion that occurs so often in
the study of groups acting on trees (see \cite{Serre80}). In particular
we show that a group acting on a $CAT(0)$ cube complex has no inversion
on the cubical subdivision  (see Lemma~\ref{lem:subdinversion}). An automorphism acts stably without inversion if every power of the automorphism acts without inversion.

 In Section~\ref{sec:axisautom} we prove that if an automorphism of a
$CAT(0)$ cube complex  preserves a combinatorial geodesic then its
minimal (combinatorial) displacement in the complex is the same as on the
geodesic.

 In Section~\ref{sec:classification} we prove Theoem~\ref{thm:classif}.
So we show that an automorphism acting stably without inversion and without fixed point
preserves a combinatorial geodesic.

 And in Section~\ref{sec:application} we deduce
Theorem~\ref{thm:distortednopropercube}.

\bigskip

I would like to thank Estelle Souche, Yves Stalder, Alain Valette, Cornelia Dru\c tu and Dani Wise for helpfull discussions on this subject.

\section{Geometry of $CAT(0)$ cube complexes.}\label{sec:geometry cube
complex}

\subsection{Cube complexes and non-positive curvature conditions}

The following notion of {\em cube complexes} is equivalent to the notion
of {\em cubical complexes} introduced in
\cite{BridsonHaefliger}[Definition 7.32 p 112].

\begin{defn}[cube complexes] Let $X$ denote some set.
{\em A parametrized cube of $X$} is an embedding $f:C\to X$, where $C$ is
some 
euclidean cube (a cube of a euclidean space).
{\em A face} of a parametrized cube $f:C\to X$ is a
restriction of $f$ to one of the faces of $C$. Two parametrized cubes
$f:C\to X,f':C'\to X$ are {\em isometric} whenever there is an isometry
$\varphi :C\to C'$ such that $f\varphi=f'$.

{\em A  cube complex} is a set $X$ together with a family $(f_i)_{i\in I}$
of  parametrized cubes $f_i:C_i\to X$  such that

\begin{enumerate}
\item(covering) each point of $X$ belongs to the range of some $f_i$
\item(compatibility) for any two maps $f_i,f_j$ either  $f_i(C_i)\cap
f_j(C_j)=\emptyset$ or $f_i,f_j$ have isometric faces
$f_{ij}:C_{ij}\subset C_i\to X,f_{ji}:C_{ji}\subset C_j\to
X$ such that $f_i(C_{ij})=f_j(C_{ji})=f_i(C_i)\cap f_j(C_j)$
\end{enumerate}

The {\em cube} associated to the parametrized cube $f_i:C_i\to X$ is the
image $f_i(C_i)$. Note that if two parametrized cubes $f_i:C_i\to
X,f_j:C_j\to X$ have the same image, then by the compatibility condition
above the cubes
$C_i,C_j$ are isometric, in particular they have the same dimension. This
dimension, say $k$,  will be called the {\em dimension} of the cube
$f_i(C_i)=f_j(C_j)$: we will say for short that the cube is {\em a $k$-cube}.
The 0-cubes are the {\em vertices}, the 1-cubes are the {\em edges}, the
2-cubes are  the {\em squares} ...

{\em The interior of a cube $f_i(C_i)$ of
$X$} is the image under $f_i$ of the interior of $C_i$ (independant of
the parametrized cube $f_j:C_j\to X$ such that $f_j(C_j)=f_i(C_i)$ again
by the compatibility condition). Note that by the covering condition each
point $p$ of $X$ is contained in the interior of some cube, and by the
compatibility this cube is unique (we will denote it by $C(p)$).

 When only one family $(f_i)_{i\in I}$ is considered on $X$ we will say
by abuse of langage that {\em $X$ is a cube complex}, and we will denote
by $X^k$ the union of all $k$-cubes of $X$. This is the {\em $k$-skeleton
of
$X$}. More generally {\em a subcomplex of $X$} is a union of cubes of
$X$. Note that each subcomplex inherits a natural structure
of cube complex.

 For any two cube complexes $X,Y$, a map $f:X\to Y$ is said to be {\em
combinatorial} whenever for each parametrized cube $f_i:C_i\to X$, the
composite $ff_i$ is isometric to a
parametrized cube of $Y$. In particular such a map sends vertices to
vertices, edges to edges ...
Observe that the natural inclusions of subcomplexes are combinatorial.
Note also that a combinatorial map $f:X\to Y$ sends the interior of a
cube $C\subset X$ bijectively onto the interior of $f(C)$. 

{\em The automorphism group of a cube complex $X$} is the set of
combinatorial bijections $X\to X$ (this is indeed a subgroup of the
permutation group of $X$).

\end{defn}

\begin{exmp}
Assume that $\mathcal H$ is a Hilbert space and that $\mathcal C$ is a
collection of unit euclidean cubes of $\mathcal H$, such that for two
cubes of  $\mathcal C$, their intersection is either empty or a cube of
$\mathcal C$. Then the union $X$ of all cubes $C\in{\mathcal C}$ has a
natural structure of cube complex where the parametrized cubes are the
inclusion $C\to X,C\in{\mathcal C}$.

\end{exmp}

\begin{defn}[subdivisions]
Let $C$ denote some euclidean cube. For short we will call {\em  barycenter of $C$} the barycenter of the set of vertices of $C$ (with equal unit weights).
{\em A vertex of the barycentric
subdivision of
$C$} is the  barycenter $b_F$ of some face $F$ of $C$. {\em A simplex of the
barycentric subdivision of
$C$} is the simplex affinely
generated by vertices $b_{F_0},\dots,b_{F_k}$, where the faces $F_i$
satisfy $F_0\subset \dots\subset {F_k}$. This defines a  simplicial complex ${C'}_{\rm simpl}$  called {\em the simplicial barycentric subdivision of
$C$}, whose geometric realization is
identical with $C$. Note that any isometry
$C\to D$ of unit euclidean cubes induces an isomorphism ${C'}_{\rm simpl}\to {D'}_{\rm simpl}$.

Let $X$ denote a cube complex. The {\em simplicial barycentric subdivision of $X$} is 
the simplicial complex ${X'}_{\rm simpl}$ whose cells are restrictions of the $f_i:C_i\to X$ to the simplices of ${{C_i}'}_{\rm simpl}$.

Given  two comparable faces $F\subset G\subset C$ of a eulidean cube, the union of simplices $\{b_{F_0},\dots,b_{F_k}\}$ of $C'$ satisfying $F\subset F_0,F_k\subset G$ is in fact a eulidean cube, which we call  {\em a cube of the barycentric subdivision of
$C$}.
This decomposition of the cube $C$ into smaller cubes endows $C$ with a structure of cube complex, which we call   {\em the cubical subdivision of
$C$}, denoted by ${C'}$. There is still an isomorphism ${C'}\to {D'}$  induced by any isometry
$C\to D$.

Let $X$ denote a cube complex. The {\em cubical subdivision of $X$} is 
the cube complex ${X'}$ whose parametrized cubes are restrictions of the $f_i:C_i\to X$ to the cubes of the cubical subdivision of
${C_i}$.

\end{defn}

\begin{defn}[links of vertices]

Let $X$ denote a cube complex, and let $v$ denote some vertex.
 The collection of cubes containing $v$ (distinct of $\{v\}$) is an
abstract simplicial complex, whose set of vertices is the set of edges of
$X$ containing $v$. We will  denote this simplicial complex by ${\rm
link}(v,X)$ (the {\em link of $v$ in $X$}).

 The cube complex $X$ is {\em combinatorially non positively curved}  if 
each vertex link is
{\em flag} (that is each complete subgraph is the
1-skeleton of a simplex). We say that $X$ is {combinatorially $CAT(0)$} whenever $X$ is 
combinatorially non positively curved and simply-connected. Following Sageev we will rather say {\em cubing} instead of combinatorially $CAT(0)$ cube complex

\end{defn}

\begin{defn}[the pseudometric of a cube complex, see
\cite{BridsonHaefliger} 7.38 p 114]

Let $X$ denote a cube complex. {\em A piecewise geodesic of $X$ with
endpoints $x,y$} is a map
$c:[a,b]\to X$ such that $c(a)=x,c(b)=y$, there is a subdivision
$a=t_0\le\dots\le t_n=b$, a sequence of parametrized cubes $f_1:C_1\to
X,\dots,f_n:C_n\to X$ and a sequence of isometries $(c_i:[t_{i-1},t_i]\to
C_i)_{1\le i\le n}$ satisfying $f_ic_i=c$ on $[t_{i-1},t_i]$ for $1\le
i\le n$.

The {\em length} of the piecewise geodesic is $\size{b-a}$.

We define a pseudometric $d$ on $X$ by setting $d(x,y)=$ the infimum of
the lengths of piecewise geodesics of $X$ with endpoints $x,y$.

\end{defn}

Here are now two results linking the metric and the combinatorial viewpoint on cube complexes.

\begin{lem}\label{lem:geodesiclengthmetric}
The pseudometric $d$ on a combinatorially non positively curved and simply-connected cube complex  $X$ is a geodesic length metric. 
\end{lem}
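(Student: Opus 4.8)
The plan is to prove the lemma in two stages: first that $d$ is genuinely a metric (not merely a pseudometric), and second that it is a geodesic length metric, meaning any two points are joined by a path realizing the infimum in the definition of $d$.

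\medskip

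First I would establish that $d$ separates points, i.e. $d(x,y)>0$ whenever $x\neq y$. The obstruction to this in a general length pseudometric is that points could be approximated by arbitrarily short piecewise-geodesic paths passing through infinitely many cubes. To rule this out, I would use the combinatorial local structure: fix a point $x$ lying in the interior of its carrier cube $C(x)$, and use the flag/non-positive-curvature condition together with the finiteness of the combinatorial distance between vertices (available from the earlier geometry section) to produce a genuine positive-radius neighborhood. Concretely, I would argue that there is an $\varepsilon>0$ such that the ball of $d$-radius $\varepsilon$ around $x$ is contained in the open star of $x$ (the union of interiors of cubes containing the carrier of $x$), and that on this star $d$ agrees with the Euclidean metric inherited from a single cube or a short chain of cubes. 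Since Euclidean metrics separate points, $d(x,y)>0$ for $x\neq y$ nearby, and the triangle inequality then propagates this to all pairs.

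\medskip

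Second, for the length-metric and geodesic conclusion, the natural tool is the Bridson–Haefliger machinery for $M_\kappa$-polyhedral complexes with finitely many shapes of cells. Here every cube is a unit Euclidean cube, so there is only one isometry type of cell in each dimension and the \emph{shapes are uniformly bounded}; this guarantees that the piecewise-geodesic pseudometric is in fact a complete geodesic length space, provided the space is connected and the pseudometric is a metric (see \cite{BridsonHaefliger}, the theorems on $M_\kappa$-complexes with $\mathrm{Shapes}(X)$ finite). The non-positive curvature (flag links) plus simple-connectedness then upgrades this, via the combinatorial Cartan–Hadamard theorem, to the $CAT(0)$ property; but for this lemma I only need the geodesic length-metric conclusion. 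So I would invoke the Bridson–Haefliger result once separation of points is in hand, taking care that the ``finitely many shapes'' hypothesis is replaced by ``uniformly bounded shapes'' since the dimension may be infinite.

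\medskip

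I expect the main obstacle to be precisely the point at which finite dimension is usually assumed: the standard statement that a cube complex is a geodesic length space relies on having only finitely many isometry types of cells, whereas here the dimension — and hence the number of cube-shapes — may be unbounded. The key observation that saves the argument is that all cubes are \emph{unit} cubes, so although there are infinitely many shapes, their geometry is uniform (bounded diameter, and a uniform lower bound on the distance from the center of a cube to its boundary). I would therefore verify that the relevant compactness/completeness step in the Bridson–Haefliger proof goes through under uniform boundedness of shapes rather than finiteness, and that the local Euclidean model near each point is controlled by the combinatorics of a single vertex link, which is always a finite-dimensional object near a given point even when $X$ is infinite-dimensional.
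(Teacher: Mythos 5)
You cannot be checked against the paper's own proof here, because the paper deliberately gives none: immediately after stating this lemma the author records that it had only been established for cube complexes of bounded dimension (citing Bridson--Haefliger), that ``the equivalence between the metric and the combinatorial condition has never been checked explicitly for general cube complexes,'' and points only to Gerasimov and to an unpublished proof of Y.~Algom. So your proposal must stand on its own, and it does not. The pivot of your argument --- that although there are infinitely many shapes ``their geometry is uniform (bounded diameter \dots)'' --- is false: the unit $n$-cube has diameter $\sqrt{n}$, so the shapes of an infinite-dimensional cube complex are not uniformly bounded in any sense that could substitute for the hypothesis $\mathrm{Shapes}(X)$ finite in Bridson--Haefliger. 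What \emph{is} uniform is only the distance from a point $x$ to the faces of a cube not containing $x$, because this depends only on the position of $x$ in its carrier, a finite-dimensional cube; that observation is enough to salvage your first step (separation of points, i.e.\ $\varepsilon(x)>0$ and $d$ is a metric), but not your second.

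The second step fails because the Bridson--Haefliger proof that a complex with finitely many shapes is a complete geodesic space is not a ``bounded geometry'' argument: it rests on compactness --- taut strings meet a bounded number of cells drawn from finitely many isometry types, and geodesics are extracted by an Arzel\`a--Ascoli argument --- and these steps require local compactness, which fails here. Indeed, in the paper's own example in $\ell^2(\integers)$ the vertex $0$ lies on infinitely many edges, so no ball is compact. Without local compactness a complete length space need not be geodesic (Hopf--Rinow is unavailable), so you cannot, as you propose, extract ``only the geodesic length-metric conclusion'' while setting the curvature hypothesis aside: geodesic existence in this generality must come either from completeness plus the local $CAT(0)$ property via the Cartan--Hadamard theorem (which needs no local compactness, but then you must first prove completeness and Gromov's link condition in infinite dimensions), or from the combinatorics --- for instance, any two vertices lie in the cubical interval between them, which crosses only finitely many hyperplanes, hence is a finite-dimensional convex subcomplex where the classical result applies, after which arbitrary points are handled by approximation. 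Your proposal is missing exactly this replacement for the compactness step, and that missing step is the entire content of the lemma --- which is precisely why the paper declines to prove it and retreats to purely combinatorial geometry for the rest of the argument.
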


\begin{lem}[\cite{Gromov87}]\label{lem:equivCAT0}
 Let $X$ denote some  cube complex. Then $X$  is a combinatorially non
positively curved if and only if the length metric $d$ on $X$  is locally
$CAT(0)$. In particular a cube complex is metrically $CAT(0)$ if and only if it is
combinatorially $CAT(0)$.
\end{lem}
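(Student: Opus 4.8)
The plan is to prove the local statement --- that combinatorial non-positive curvature is equivalent to the length metric $d$ being locally $CAT(0)$ --- and then to upgrade it to the global ``in particular'' claim via Cartan--Hadamard. The whole argument rests on Gromov's link condition, so the first step is to identify, for each vertex $v$, the local geometry of $X$ near $v$ with the Euclidean cone over a metric realization of the abstract link $\link(v,X)$. Since every cube of $X$ is a \emph{unit} euclidean cube, the angle between any two edges issuing from $v$ inside a common cube equals $\pi/2$; hence the natural metric on $\link(v,X)$ makes it an \emph{all-right spherical complex} (each simplex is a spherical simplex all of whose edges have length $\pi/2$), whose underlying simplicial complex is exactly the abstract link defined above. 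By the cone theorem (a euclidean cone is $CAT(0)$ precisely when its link is $CAT(1)$), the metric $d$ is locally $CAT(0)$ at $v$ if and only if $\link(v,X)$ is $CAT(1)$; points in the interior of a higher cube are handled by the same cone description together with the spherical-join decomposition of their links, so it suffices to test the condition at vertices.

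The combinatorial heart is then Gromov's lemma for all-right spherical complexes: \emph{such a complex is $CAT(1)$ if and only if it is flag}. For the implication $CAT(1)\Rightarrow$ flag, a failure of flagness produces, after passing to an iterated link if necessary, a hollow triangle, that is three pairwise-joined vertices not spanning a $2$-simplex; the loop through its three edges is a closed local geodesic of length $3\pi/2<2\pi$, which no $CAT(1)$ space admits. For the converse I would show that in a flag all-right complex no closed local geodesic has length $<2\pi$: one passes to the link of a vertex met by such a loop, which is again all-right and flag, and the flag condition lets one fill a putative short bigon or triangle and extract a strict shortcut. Combining this with the previous step gives the chain $X$ combinatorially non-positively curved $\iff$ every $\link(v,X)$ flag $\iff$ every vertex link $CAT(1)$ $\iff$ $d$ locally $CAT(0)$.

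For the global equivalence, combinatorially $CAT(0)$ means combinatorially non-positively curved together with simple connectivity. By the local statement $d$ is then locally $CAT(0)$, and by Lemma~\ref{lem:geodesiclengthmetric} it is moreover a \emph{complete geodesic} length metric; the Cartan--Hadamard theorem, in the form valid for complete simply connected locally $CAT(0)$ spaces and requiring no local compactness, yields that $(X,d)$ is globally $CAT(0)$. Conversely a metrically $CAT(0)$ space is contractible, hence simply connected, and is a fortiori locally $CAT(0)$, hence combinatorially non-positively curved; so it is combinatorially $CAT(0)$.

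The main obstacle is the absence of any finite-dimensionality hypothesis, since the classical link condition, the inductive proof of the cone theorem, and Cartan--Hadamard are usually quoted for complexes with finitely many shapes. I would dispose of this precisely where it bites, in Gromov's lemma and in the cone reduction: a geodesic realizing $d$ is a piecewise geodesic meeting only finitely many cubes, so every geodesic triangle --- and hence every instance of a $CAT(0)$ or $CAT(1)$ comparison --- lies in a finite, therefore finite-dimensional, subcomplex, on which the classical arguments apply verbatim. Completeness, the only global input Cartan--Hadamard needs, is supplied by Lemma~\ref{lem:geodesiclengthmetric}, so no compactness assumption is required anywhere.
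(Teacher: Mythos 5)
You should know at the outset that the paper contains \emph{no proof} of this lemma: immediately after stating it, the author says explicitly that the equivalence ``has never been checked explicitly for general cube complexes,'' that the references (\cite{BridsonHaefliger}, \cite{Gromov87}) cover only cube complexes of bounded dimension, that Gerasimov is ``very near'' a general proof without giving one, and that the only full proof he knows of (by Y.~Algom) is unpublished. So your proposal undertakes precisely the step the author declined to supply, and it must be judged on whether it genuinely handles the infinite-dimensional case --- which, as the author correctly signals, is the only nontrivial point. Your outline of the classical finite-dimensional chain (cone over an all-right link, Berestovskii's cone theorem, Gromov's flag $\Leftrightarrow$ $CAT(1)$ lemma, Cartan--Hadamard) is the standard and correct skeleton; the problem is the final paragraph, where infinite dimensionality is dismissed.

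The reduction ``every geodesic triangle lies in a finite subcomplex, on which the classical arguments apply verbatim'' does not work as stated, for three compounding reasons. First, the $CAT(0)$ comparison for a triangle in $X$ is a statement about distances in $(X,d)$; a finite subcomplex $Y$ containing the triangle carries its intrinsic metric $d_Y\ge d$, which is in general strictly larger, and the sides of the triangle are $d$-geodesics rather than $d_Y$-geodesics, so the classical theorem applied to $Y$ yields the comparison inequality in the wrong metric. To repair this you would need a finite \emph{convex} subcomplex containing the triangle, and constructing one is exactly the hard point --- note also that a finite subcomplex of a combinatorially non-positively curved complex need not itself be non-positively curved, since its vertex links are subcomplexes of the links of $X$ that need not be full, and flagness passes only to full subcomplexes. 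Second, the existence and structure of geodesics is circular as you use it: Bridson's existence theorem requires finitely many shapes, which fails here (one shape per dimension, infinitely many in total); Lemma~\ref{lem:geodesiclengthmetric} asserts only that $d$ is a geodesic length metric --- \emph{completeness is not part of its statement}, so your Cartan--Hadamard step has no source for its hypothesis --- and that lemma is the companion statement the paper equally leaves unproven in infinite dimensions; moreover a $d$-geodesic is a priori just an isometric embedding of an interval, and your claim that it ``is a piecewise geodesic meeting only finitely many cubes'' is itself something to prove, not a consequence of the definitions. Third, the same defect recurs inside your sketch of Gromov's lemma: in an infinite-dimensional all-right spherical complex one cannot certify $CAT(1)$ merely by excluding closed local geodesics of length $<2\pi$ unless one already knows geodesics exist and the space is complete, and the standard proof of ``flag $\Rightarrow CAT(1)$'' is an induction on dimension that does not restrict to subcomplexes for the reasons above. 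A correct treatment of the general case (later given by I.~Leary) proves completeness, the geodesic property, and the limiting behaviour over finite subcomplexes from scratch; that this required a substantial separate argument is a fair measure of what ``apply verbatim'' is concealing.
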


The two previous lemmas have been established for cube complexes whose cubes have all dimension $\le n$ ; see for example \cite{BridsonHaefliger}. This result is so classical for people working on cube complexes that they usually identify the metric condition $CAT(0)$ with its combinatorial analogue. But in fact the equivalence between the metric and the combinatorial condition has never been checked explicitly for general cube complexes. People just take the combinatorial non positively curvature condition as a {\em definition} of "locally $CAT(0)$". 
For instance, when Chatterji-Niblo and Nica achieve the geometrization of spaces with walls (\cite{ChatterjiNiblo04, NicaCubulating04}), turning these to $CAT(0)$ cube complexes, they in fact check the combinatorial condition, and do not tell us anything about the metric - although Gromov's hypothesis of finite dimensionality usually fails. Gerasimov is very near to proving the Lemmas in full generality, but he does not do it explicitly (see \cite{Gerasimov97}). In a private communication Michah Sageev has told me that Y. Algom, a student of his, had obtained a proof (unpublished).

We do not insist since in the sequel we will be only concerned with the {\em combinatorial} geometry of $CAT(0)$ cube complexes (or cubings), which we recall in the next section. Our slogan is: in $CAT(0)$ cube complexes the combinatorial geometry is as nice as the $CAT(0)$ geometry.  In fact the result of this paper is that, in some context, it is even nicer.

{\em  In the sequel we do not make any restriction on the cubing $X$: in  particular we do not assume $\dim X<\infty$.}

\subsection{Hyperplanes, combinatorial distance and convex subcomplexes}

\begin{defn}\label{defn:elemhom}
A {\em combinatorial path} of a cube complex $X$ is a sequence $\gamma=(v_0,v_1,\dots,v_n)$ of vertices of $X$ such that for each $i=0,\dots,n-1$ either $v_{i+1}=v_i$ or  $v_{i+1},v_i$ are the two (distinct) endpoints of some edge of $X$. The {\em initial point of $\gamma$} is $v_0$, {\em the terminal point of $\gamma$} is $v_n$ and {\em the length of $\gamma$} is $n$. If for each $i=0,1,\dots,n-1$ we have $v_{i+1}\neq v_i$ we say that $\gamma$ is {\em non stuttering}.

 When $\gamma=(v_0,v_1,\dots,v_n),\gamma'=(w_0,w_1,\dots,w_m)$ are two combinatorial paths such that the terminal point of $\gamma$ is the initial point of $\gamma'$ we define as usual the product $\gamma.\gamma'$  to be the path $(v_0,v_1,\dots,v_{n-1},v_n=w_0,w_1,\dots,w_m)$.

The {\em combinatorial distance between two vertices $x,y$} of a connected cube complex is the minimal length of a combinatorial path joining $x$ to $y$. It will be denoted by $d(x,y)$, and a path of length $d(x,y)$ will be called {\em a (combinatorial) geodesic}. We note that $d(x,y)$ is also the minimal length of a non stuttering combinatorial path joining $x$ to $y$ (in other words geodesics are non stuttering).

Sequences $(p_n)_{n\in\integers}$ of vertices of the cube complex such that $d(p_n,p_m)=\size{m-n}$ will also be called {\em (infinite) geodesics}.

\end{defn}

\begin{defn}[convex subcomplexes]

Let $X,Y$ denote  cube complexes and let $f:X\to Y$ denote a combinatorial map. We say that $f$ is a {\em local isometry} if for each vertex $v$ of $X$ the induced simplicial map $f_v:{\rm link}(v,X)\to  {\rm link}(f(v),Y)$ is injective and has full image (recall that a subcomplex $L\subset K$ of a simplicial complex is {\em full} whenever each simplex of $K$ whose vertices are in $L$ in fact belongs to $L$).

We say that a subcomplex of a cube complex is {\em locally convex} if the inclusion map is a local isometry.

We say that a subcomplex $Y$ of a cube complex $X$ is {\em  (combinatorially) convex} if it is connected, and any (combinatorial) geodesic between two vertices of $Y$ has all of its vertices inside $Y$.

\end{defn}
\begin{rem}
\begin{enumerate}
\item A fundamental property of cubings is that they have plenty of non-trivial convex subcomplexes.
\item In fact a connected subcomplex $Y\subset X$ of  a cubing  is locally convex in the sense of the above definition iff it is geodesically convex for the $CAT(0)$ metric. Thus there is no confusion with the term convex.
\end{enumerate}

\end{rem}

We now introduce the most important tool for studying the combinatorial geometry of cubings:

\begin{defn}[walls, hyperplanes]
Let $X$ denote any cube complex. Two edges $a,b$ of $X$ are said to be {\em elementary parallel} whenever they are disjoint but contained in some (necessarily unique) square of $X$. We call {\em parallelism} the equivalence relation on the set of edges of $X$ which is generated by elementary parallelisms. A {\em wall of $X$} is an equivalence relation for the parallelism relation. When an edge $e$ belongs to some wall $W$ we say that {\em $W$ passes through $e$}, or that {\em $W$ is dual to $e$}. We also say that {\em a cube $C$ of $X$ is dual to the wall $W$} when $C$ contains an edge $e$ to which $W$ is dual.

Let $C$ denote some euclidean cube of dimension $n$, and let $\Euclidean$ denote the ambient euclidean space. For every edge $e$ of $C$ with endpoints $p,q$ we consider the hyperplane of $\Euclidean$ consisting in points which are at the same distance of $p$ and $q$. Then the intersection of this hyperplane with $C$ is a euclidean cube, whose cubical subdivision is a subcomplex of $C'$. We denote this subcomplex of $C'$ by $h_e$, and call it {\em the hyperplane of $C$ dual to $e$}. Observe that $h_e=h_{e'}$ iff $e$ and $e'$ are parallel. Note also that the hyperplane of a segment consist in its midpoint.

Now let $e$ denote some edge of  a cube complex $X$, and let $W$ denote the wall through $e$.
For each parametrized cube $f:C\to X$ and each edge $a$ of $C$ such that $e\parallel f(a)$, we consider the image under the induced combinatorial map $f:C'\to X'$ of the hyperplane $h_a$ of $C$ dual to $a$.  The union of all these $f({h_a})$ is called {\em a hyperplane of $X$}, it  will be denoted by $H_e$, and we will say that $H_e$  is {\em dual to $e$}. (Note that a hyperplane of $X$ is a subcomplex of the subdivision $X'$.) Clearly $H_e=H_{e'}$ iff $e$ and $e'$ are parallel in $X$. In other words the set of edges to which a given hyperplane is dual consists in a wall. Thus walls and hyperplanes are in one-to-one correspondance. We will say that a cube $C$ of $X$ is {\em dual} to some hyperplane $H$ when $C$ contains an edge $e$ to which $H$ is dual.

Let $H$ denote some hyperplane of a cube complex $X$. The {\em neighbourhood of $H$} is the union of all cubes dual to $H$. We will denote it by $N_H$.

\end{defn}

Combining results of Sageev we get the following description of hyperplane neighbourhoods:

\begin{thm}\label{thm:hyperplanes}[see \cite{Sageev95}, ]
Let $X$ be a cubing and let $H$ denote some hyperplane of $X$ with neighbourhood  $N_H$.
\begin{enumerate}
\item $H$ separates $X$ into two connected components
\item\label{convneighbourhood} $N_H$ is (combinatorially) convex in $X$
\item\label{symmetry} $N_H$ admits an automorphism $\sigma_H$ that fixes pointwise $H$ and exchanges the endpoint of each edge dual to $H$
\end{enumerate}
\end{thm}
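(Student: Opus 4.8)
The plan is to route everything through a single $\integers/2$-valued invariant — the parity of the number of times a combinatorial path crosses $H$ — which yields (1), and through a structural description of $N_H$ as a combinatorial product, which yields (2) and (3) together. Say a path $\gamma=(v_0,\dots,v_n)$ \emph{crosses} $H$ at step $i$ when the edge $[v_i,v_{i+1}]$ is dual to $H$, and let $c_H(\gamma)\in\integers/2$ be the parity of the set of such steps. First I would verify that $c_H$ is unchanged under the two elementary homotopies of combinatorial paths: deleting or inserting a backtrack alters the crossing count by $0$ or $2$, while exchanging one pair of adjacent sides of a square for the opposite pair leaves the crossing count of \emph{each} of the square's two hyperplanes fixed, since both of these length-two paths cross each of those two hyperplanes exactly once. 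As $X$ is simply connected, any two paths with equal endpoints differ by finitely many such moves, so $c_H(x,y):=c_H(\gamma)$ is well defined; fixing a basepoint $x_0$ and setting $V^{\pm}=\{v\mid c_H(x_0,v)=0 \text{ resp. }1\}$ partitions the vertices, with an edge joining opposite sides exactly when it is dual to $H$.

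The technical heart is the \emph{crossing lemma}: a combinatorial geodesic crosses each hyperplane at most once. I would argue by contradiction, taking a geodesic with two \emph{consecutive} crossings of $H$, on edges $e=[v_i,v_{i+1}]$ and $e'=[v_j,v_{j+1}]$ with no crossing of $H$ strictly between them. These edges are parallel, hence linked by a chain of elementary parallelisms, i.e. by squares; using the flag (non-positive curvature) condition I would perform square moves that carry the crossing at $e$ forward, decreasing $j-i$ while preserving both geodesicity and the consecutiveness of the two crossings. When $j-i$ is pushed to its minimum the two parallel dual edges are forced to share a vertex — impossible, since parallel edges are disjoint — or else the manoeuvre exhibits an explicit shortcut, contradicting that $\gamma$ is geodesic. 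I expect this inductive ``ladder'' argument, which takes place inside the finite subcomplex swept out by the intervening squares (so that no bound on $\dim X$ is needed), to be the main obstacle.

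Granting the crossing lemma, statement (1) is quick: if $v,v'$ lie on the same side then $c_H(v,v')=0$, so any geodesic between them crosses $H$ an even number of times, hence (by the lemma) not at all, and therefore stays on one side and off $H$; thus each of $V^{+},V^{-}$ spans a connected piece of $X\setminus H$, and since every open cube either misses $H$ (landing in one piece) or is dual to $H$ (cut into its two halves), $X\setminus H$ has exactly these two components. Next I would establish the structural fact underlying (2) and (3): \emph{each vertex of $N_H$ is incident to a unique edge dual to $H$}. If two distinct dual edges shared a vertex $v$, their far endpoints would lie on the common side opposite to $v$ while being joined by a two-step path crossing $H$ twice; the flag condition (which rules out a square carrying two edges of a common vertex that are parallel) together with the crossing lemma and the disjointness of parallel edges then forces a contradiction. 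Uniqueness lets one match each cube $C$ dual to $H$ with its splitting $C\cong \partial_- C\times[0,1]$ along the dual direction and glue these compatibly, so that $N_H$ acquires a combinatorial product structure $N_H\cong \partial_-N_H\times[0,1]$ with $H$ as the mid-level.

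Statement (2) then follows by checking that the inclusion $N_H\hookrightarrow X$ is a local isometry: at each vertex $v$ the product description identifies $\link(v,N_H)$ with a full subcomplex of $\link(v,X)$, fullness being precisely the flag condition applied to the cubes through $v$ that are dual to $H$. Being a connected, locally convex subcomplex of a cubing — where local convexity promotes to combinatorial convexity by the standard local-to-global principle — $N_H$ is convex. Finally, for (3) I would take $\sigma_H$ to be the flip of the interval factor, $(p,t)\mapsto(p,1-t)$, under $N_H\cong\partial_-N_H\times[0,1]$; it is a combinatorial automorphism by construction, restricts to the identity on the mid-level $H$, and exchanges the two endpoints of every edge dual to $H$, which is exactly the assertion. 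The consistency of this definition across overlapping dual cubes is guaranteed by the uniqueness of the dual edge at each vertex already used for the product structure.
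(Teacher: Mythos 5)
Your overall architecture is sound and is in fact the standard self-contained treatment (parity invariant for separation, crossing lemma, product structure $N_H\cong\partial_-N_H\times[0,1]$, flip automorphism); the parts of it that are genuinely complete — the well-definedness of $c_H$, the deduction of (1) from the crossing lemma, the identification of $\mathrm{link}(v,N_H)$ with the closed star of the unique dual edge $e_v$ and its fullness via flagness, and the flip for (3) — are all correct. But the load-bearing ingredient, the crossing lemma (Theorem~\ref{thm:combgeod} in the paper, which the paper does \emph{not} reprove but imports from Theorem~4.13 of \cite{Sageev95}), is not established by your sketch, and this is a genuine gap. Two concrete problems. First, your terminal step ``the two parallel dual edges are forced to share a vertex --- impossible, since parallel edges are disjoint'' is invalid: only \emph{elementary} parallel edges (opposite edges of a common square) are disjoint by definition; two edges merely lying in the same wall can a priori share a vertex, and ruling that out is exactly your later ``unique dual edge at each vertex'' claim --- which you derive \emph{from} the crossing lemma. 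So the sketch is circular at its base case, unless the ladder argument is shown to end inside a single square, which nothing in the sketch guarantees. Second, ``square moves that carry the crossing forward while preserving geodesicity'' has no supplied mechanism: the chain of elementary parallelisms joining the two dual edges lives anywhere in $X$ and need not interact with the geodesic at all; making this precise is essentially a disc-diagram argument, and you yourself flag it as the main obstacle.

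Two smaller soft spots, and the comparison with the paper. In your uniqueness argument, the case $d(y,y')=1$ is not excluded by the flag condition (flagness does not forbid triangles in the $1$-skeleton: a $3$-cycle graph is vacuously flag but not simply connected); what kills it is that in a simply connected complex whose $2$-cells have even boundary length, the length of a path mod~$2$ depends only on its endpoints --- this is precisely the parity argument the paper uses for its uniqueness claim, and since it is the same technique as your $c_H$ invariance it is easily repaired, but as written the contradiction is misattributed. For (2), you invoke wholesale the local-to-global principle that a connected locally convex subcomplex of a cubing is combinatorially convex; this is true but nontrivial, and the paper avoids it entirely: it quotes Sageev's Theorem~4.13 that the two boundary components of $N_H$ are combinatorially convex and deduces convexity of $N_H$ from the separation property, just as it quotes Theorem~4.10 of \cite{Sageev95} for statement (1). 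In short, the paper's proof outsources all the hard combinatorics and only proves the unique-dual-edge claim (via boundary convexity plus parity) and the compatible gluing of the cube reflections $\sigma_Q$; you reproduce those two steps essentially correctly, but the gaps sit exactly in the foundational facts you elected to prove from scratch.
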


\begin{proof}
\begin{enumerate}
\item This is Theorem 4.10 in \cite{Sageev95}.
\item By the separation property above, it follows that the union of all cubes of $N_H$ disjoint of $H$ consists in the disjoint union of two connected subcomplexes, which we call the boundary components of $N_H$.  Then  Theorem 4.13 of \cite{Sageev95} tells us that each boundary component of $N_H$ is a combinatorially convex subcomplex. The combinatorial convexity of $N_H$ itself follows immediately, using the fact that $H$ disconnects $X$.
\item We first claim that for each vertex $x$ of $N_H$ there exists a unique edge  $e_x$ dual to $H$ and containing $x$.  The existence is by definition of $N_H$, and we just have to check uniqueness. Assume by contradiction that there are two distinct edges $e,e'$ containing $x$ and dual to $H$. Then the endpoints $y,y'$ of $e,e'$ distinct from $x$ are contained in the same connected boundary component of $N_H$. By  Theorem 4.13 of \cite{Sageev95} this boundary component is convex. Thus $(y,x,y')$ is not a geodesic, so that $d(y,y')\le 1$. The complex $X$ is simply-connected and its 2-faces are polygons with even length: it follows that the length mod. 2 of paths in $X$ depends only on the endpoints. Thus $d(y,y')$ is even, and we deduce that $y=y'$, so that $e=e'$, contradiction.

Let $Q$ denote any cube of $X$ dual to $H$. Then by the previous remark any two edges of $Q$ dual to $H$ in $X$  are in fact parallel inside $Q$. Let $\sigma_Q$ denote the reflection of $Q$ preserving each edge of $Q$ dual to $H$, and exchanging the endpoints of these edges. For $Q_1\subset Q_2$  the restriction of $\sigma_{Q_2}$ to $Q_1$ is $\sigma_{Q_1}$. Thus the collection of reflections $({\sigma_Q})_{Q{\rm\ dual\ to\ }H}$ defines a reflection $\sigma_H:N_H\to N_H$ with the desired properties.

\end{enumerate}
\end{proof}

\begin{defn}
Let $\gamma=(x_0,x_1,\dots,x_n)$ denote a non stuttering  path.
We first let $(e_1,\dots,e_n)$ denote the sequence of edges of $X$ such that the vertices of $e_i$ are $v_{i-1},v_i$. A hyperplane {\em crosses} $\gamma$ iff it is dual to one of the edges $e_i$, and {\em the sequence of hyperplanes that $\gamma$ crosses} is  the sequence $(H_1,\dots,H_n)$ where $H_i$ is the hyperplane of $X$ dual to $e_i$.
\end{defn}

The last part of Theorem 4.13 in \cite{Sageev95} gives:

\begin{thm}\label{thm:combgeod}
Let $X$ denote  a cubing.  Then a non stuttering path is a combinatorial geodesic if and only if the sequence of hyperplanes it crosses has no repetition.

In particular the combinatorial distance between two vertices $x,y$ is equal to the number of hyperplanes of $X$ that separates $x$ and $y$.
\end{thm}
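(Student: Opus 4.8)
The plan is to reduce everything to the separation property. The first part of Theorem~\ref{thm:hyperplanes} lets me assign to each hyperplane $H$ a partition of the vertices of $X$ into two \emph{sides}, namely the vertex sets of the two components of $X\setminus H$. Since $H$ is a subcomplex of the subdivision $X'$ meeting an edge $e$ exactly when $e$ is dual to $H$ (and then only at its midpoint), an edge is dual to $H$ precisely when its two endpoints lie on opposite sides. Hence, for a non-stuttering path $\gamma=(x_0,\dots,x_n)$, the side of $x_k$ changes exactly at the steps where $\gamma$ crosses $H$, so $x_0,x_n$ lie on opposite sides of $H$ if and only if $\gamma$ crosses $H$ an odd number of times. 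In particular every hyperplane separating $x_0$ from $x_n$ is crossed at least once; and since a non-stuttering path has length equal to the number of its crossings counted with multiplicity, the length of any geodesic from $x_0$ to $x_n$ is at least the number of distinct hyperplanes separating them. This is the lower bound I will exploit.

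For the easy implication, suppose the crossing sequence $(H_1,\dots,H_n)$ of $\gamma$ has no repetition. Then $\gamma$ crosses each $H_k$ exactly once, so by the parity observation each $H_k$ separates $x_0$ from $x_n$. Thus at least $n$ distinct hyperplanes separate $x_0,x_n$, and the lower bound gives $d(x_0,x_n)\ge n$. Since $\gamma$ itself has length $n$, it is a geodesic.

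For the converse I prove the contrapositive: if $\gamma$ crosses some hyperplane $H$ twice then it is not geodesic, by exhibiting a strictly shorter path with the same endpoints. First I choose two \emph{consecutive} crossings of $H$, at steps $i<j$, so that $\gamma$ does not cross $H$ strictly between them; then $x_{i-1}$ and $x_j$ lie on one side (the positive side, say) while $x_i,\dots,x_{j-1}$ all lie on the negative side. The vertices $x_i$ and $x_{j-1}$ are the negative endpoints of the dual edges $e_i=[x_{i-1},x_i]$ and $e_j=[x_{j-1},x_j]$, so they lie on the negative boundary component of $N_H$, which is combinatorially convex (this is the content of Theorem~4.13 of \cite{Sageev95} invoked in the proof of Theorem~\ref{thm:hyperplanes}(\ref{convneighbourhood})). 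Hence any geodesic $\delta$ from $x_i$ to $x_{j-1}$ stays inside that boundary component, in particular inside $N_H$, and has length $d(x_i,x_{j-1})\le (j-1)-i$. I then apply the reflection $\sigma_H$ of Theorem~\ref{thm:hyperplanes}(\ref{symmetry}): it is a combinatorial automorphism of $N_H$, so $\sigma_H(\delta)$ is a path of the same length, and because each vertex of $N_H$ lies on a \emph{unique} edge dual to $H$ (established in the proof of part (\ref{symmetry})), $\sigma_H$ sends $x_i\mapsto x_{i-1}$ and $x_{j-1}\mapsto x_j$. Thus $\sigma_H(\delta)$ runs from $x_{i-1}$ to $x_j$ with length at most $(j-1)-i$, whereas the subpath $(x_{i-1},\dots,x_j)$ of $\gamma$ has length $j-i+1$; replacing the latter by the former shortens $\gamma$ by at least $2$.

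The main obstacle is precisely this shortening step: one must arrange the two crossings to be consecutive so that the intermediate vertices sit on a single side of $H$, and then combine convexity of the boundary component with the reflection $\sigma_H$ — the crucial point being that $\sigma_H$ is \emph{forced} to exchange $x_{i-1}\leftrightarrow x_i$ and $x_{j-1}\leftrightarrow x_j$ by the uniqueness of the dual edge at each vertex of $N_H$. Finally the ``in particular'' clause drops out: a geodesic from $x$ to $y$ crosses each hyperplane at most once, each hyperplane it crosses separates $x$ from $y$ by the parity observation, and conversely every separating hyperplane must be crossed, hence exactly once; therefore $d(x,y)$ equals the number of hyperplanes separating $x$ and $y$.
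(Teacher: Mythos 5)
Your argument is correct, but there is nothing in the paper to compare it against line by line: the paper gives no proof of Theorem~\ref{thm:combgeod} at all, simply quoting it as the last part of Theorem 4.13 of \cite{Sageev95}. What you have done is reconstruct the standard (essentially Sageev's) argument from exactly the ingredients the paper does make available, namely Theorem~\ref{thm:hyperplanes} together with two facts established inside its proof: the combinatorial convexity of each boundary component of $N_H$, and the uniqueness of the edge dual to $H$ through each vertex of $N_H$. Your easy direction via the parity of side-changes, giving the lower bound $d(x_0,x_n)\ge \#\{\text{hyperplanes separating } x_0,x_n\}$, is sound, and your shortening device for the converse --- take two \emph{consecutive} crossings of $H$, push a geodesic between the intermediate endpoints into the convex boundary component, then reflect it by $\sigma_H$, with the uniqueness of dual edges forcing $\sigma_H(x_i)=x_{i-1}$ and $\sigma_H(x_{j-1})=x_j$ --- is precisely the mechanism the paper itself re-uses later in the proofs of Lemma~\ref{lem:ifaxis} and Lemma~\ref{lem:axis} (where, however, those proofs invoke Theorem~\ref{thm:combgeod}; your self-contained proof removes that external dependency, which is what it buys over the paper's citation, and it works with no finite-dimensionality assumption, matching the paper's standing convention). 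One degenerate case deserves a sentence rather than a correction: if the two consecutive crossings are adjacent ($j=i+1$), then $x_{j-1}=x_i$ carries two dual edges $e_i,e_j$ unless they coincide, so uniqueness forces $e_i=e_j$, i.e.\ the path backtracks and $x_j=x_{i-1}$; your identities $\sigma_H(x_i)=x_{i-1}$ and $\sigma_H(x_{j-1})=x_j$ remain valid there and the replacement still shortens by $2$, but you should say so explicitly. Likewise, your opening claim that $H$ meets an edge exactly when the edge is dual to $H$, and then only at its midpoint, is true and is the correct justification that sides change exactly at crossings, though it is used tacitly in the paper rather than stated; flagging it as a lemma would tighten the write-up.
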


\section{Combinatorial translation length.}\label{sec: comb trans length}

Let $f$ denote an automorphism of the $CAT(0)$ cube complex $X$. Then $f$ is an isometry for the $CAT(0)$ distance, but also for the combinatorial distance.

\begin{defn}\label{defn:translength}
Let $f\in{\rm Aut}(X)$. For every point $x\in X$ we denote by $\delta^0(f,x)$ the $CAT(0)$ distance between $x$ and $f(x)$. And for every vertex $p\in X^0$ we denote by $\delta(f,p)$ the combinatorial distance between $p$ and $f(p)$. We then set $\delta^0(f)=\inf_{x\in X}\delta^0(f,x)$ and $\delta(f)=\inf_{p\in X^0}\delta(f,p)$. We call $\delta^0(f)$ the {\em $CAT(0)$ translation length of $f$},
and $\delta(f)$ the {\em combinatorial translation length of $f$}. Clearly translation lengths are a conjugation invariant.

Note that in fact the  combinatorial translation length $\delta(f)$ is defined for an arbitrary automorphism $f$ of a graph
(which is not necessarily the 1-skeleton of a cubing).

\end{defn}

Here is a straightforward proof that Baumslag-Solitar groups $BS(m,n)$ with $m\neq n$ cannot act properly on a finite dimensional $CAT(0)$ cube complexe $X$.

Consider  the $CAT(0)$ translation length of the element $b$ acting on $X$. In $CAT(0)$ cube complexes, finite dimensionality implies that the set of shapes of $X$ is finite: thus  every isometry $f$ satisfies  $\delta^0(f^n)=n\delta^0(f)$ (see \cite{BridsonHaefliger}). So we see that the relation $ab^ma^{-1}=b^n$ implies $m\delta^0(b)=n\delta^0(b)$. Thus $\delta^0(b)=0$ and $b$ has a fixed point since there are no parabolic elements (see \cite{BridsonHaefliger}). Hence the full infinite cyclic subgroup $<b>$ fixes a point in $X$.

I first thought that  isometries of arbitrary $CAT(0)$ cube complexes should have a nice behaviour too. But then  I was told the following example  by Estelle Souche (it also appears in an exercise of \cite{BridsonHaefliger}):

\begin{exmp}

 Consider the Hilbert space $\ell^2(\integers)$. Let
$(e^k)_{k\in\integers}$ denote the Hilbert basis such that ${e^k}_n=1$ if
$n=k$ and ${e^k}_n=0$ otherwise. The set $V=\integers^{(\integers)}$ of
maps $\integers \to\integers$ with finite support is a subset of $H$. We
consider the unit cubes of $\ell^2(\integers)$ whose vertices are elements of $V$, and whose edges are parallel to one of the ${e^k}$'s: the
union of all these cubes is a $CAT(0)$ cube complex $X$. Note that $X$ is
not finite dimensional.

 Define an isometry $\sigma$ of $\ell^2(\integers)$ on the Hilbert basis by
$\sigma(e^k)=e^{k+1}$. The map $f:\ell^2(\integers)\to \ell^2(\integers)$ defined by $f(u)=e_0+\sigma(u)$
is an affine isometry of $\ell^2(\integers)$. Note that $f(V)=V$ and $\sigma$ preserves $(e^k)_{k\in\integers}$, thus $f$ induces an
isometry of $X$.

 The isometry $f$ has no fixed point in $\ell^2(\integers)$. 

 For each $k\in \naturals$ define a vector $u^k\in X$ as follows:
${u^k}_n=0$ if $n<0$ or $n>k$ and  ${u^k}_n=1-{n\over k}$ if $0\le n\le
k$. Then $\delta^0(f,u^k)=\sqrt{1\over k}$, thus $\delta^0(f)=0$.

 In the terminology of Bridson-Haefliger (\cite{BridsonHaefliger}) the isometry $f\in{\rm
Aut}(X)$ is parabolic.

 Note that $\delta(f)>0$ since $f$ has no fixed points, and $f(0)=e_0$,
so in fact $\delta(f)=\delta(f,0)=1$. If we set $p_n=f^n(0)$ then
$(p_n)_{n\in\integers}$ is a combinatorial infinite geodesic preserved by $f$, on
which $f$ has unit combinatorial translation length.

\end{exmp}

\begin{defn}[elliptic, hyperbolic]

 Let $f\in {\rm Aut}(X)$. We say that $f$ is {\em combinatorially
elliptic} if $f$ has a fixed point in $X^0$. We say that $f$ is {\em
combinatorially hyperbolic} if $f$ is not elliptic and $f$  preserves
some infinite combinatorial geodesic $\gamma$ on which it acts as a non trivial translation. Any such geodesic $\gamma$ will be called an {\em axis for $f$}.

\end{defn}

 \begin{exmp}\label{exmp:inversion}

 Let $X$ denote a single edge. Then the  automorphism of $X$ exchanging
the endpoints of the edge is neither elliptic nor hyperbolic.

\end{exmp}

\section{Actions without inversion.}
\label{sec:inversion}

 In order to get rid of the trouble caused by automorphisms similar to
the one described in Example~\ref{exmp:inversion} we first introduce the
corresponding notion:

 \begin{defn}[inversions]
 Let $f$ denote an automorphism of a $CAT(0)$ cube complex $X$. Let $H$
denote a hyperplane of $X$, and let $X^+,X^-$ denote the two strict
half-spaces defined by $H$. We say that {\em $f$ has an inversion along $H$}
whenever $f(X^+)=X^-$ (and thus $f(X^-)=X^+,f(H)=H$). We say that {\em $f$ acts
without inversion} if there is no hyperplane  $H$ such that $f$  has an inversion
along $H$.

We say that the automorphism $f$ acts {\em stably without
inversion} when $f$ and each power of $f$ act without inversion.

And we say that {\em a group $G$ of automorphisms of $X$  acts without inversion}
if all of its elements act without inversion. Note that if a group acts without inversion, then any of its elements act stably without inversion.

\end{defn}

Just as in the tree case we have:
\begin{lem}\label{lem:subdinversion}

 Let $f$ denote an automorphism of some $CAT(0)$ cube complex $X$. Then
$f$ acts without inversion on the cubical subdivision $X'$.

\end{lem}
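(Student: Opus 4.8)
The plan is to understand the hyperplanes of the subdivision $X'$ by attaching to each of them a \emph{parent} hyperplane of $X$, and then to show that an inversion of $X'$ would force $f$ to push a vertex lying on this parent hyperplane off of it — which is impossible, since $f$ preserves the parent. This is the direct analogue of the tree picture, where subdividing turns an inverted edge into a fixed midpoint.

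First I would record the combinatorial structure of $X'$. Its vertices are the barycenters $b_C$ of the cubes $C$ of $X$, and an edge $e'$ of $X'$ joins $b_C$ to $b_D$ exactly when $C\subset D$ is a face of codimension one; such an edge runs in the unique coordinate direction of $D$ that is not fixed on $C$, and that direction determines a hyperplane $\Pi(e')$ of $X$, namely the one dual to the edges of $D$ in that direction. The first step is to check that parallel edges of $X'$ have the same parent: it suffices to treat an elementary parallelism, where two edges are opposite sides of a square $S'$ of $X'$; since $S'$ lies in the cubical subdivision of a single cube $E$ of $X$ and opposite sides of $S'$ run in the same coordinate direction of $E$, they share a parent. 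Hence $\Pi$ descends to a well-defined map from hyperplanes of $X'$ to hyperplanes of $X$, and since $f$ carries the direction of an edge of $X'$ to the corresponding direction of its image, $\Pi$ is $f$-equivariant: $\Pi(f(H'))=f(\Pi(H'))$.

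Now suppose, for contradiction, that $f$ has an inversion along a hyperplane $H'$ of $X'$, and set $H=\Pi(H')$. From $f(H')=H'$ and equivariance I get $f(H)=H$, so $f$ preserves the parent $H$ (as a subcomplex of $X'$) and thus permutes the set $V_H$ of vertices of $X'$ lying on $H$. The crux is the following separation lemma: \emph{$V_H$ lies entirely in one of the two halfspaces $S_1,S_2$ of $X'$ determined by $H'$.} To prove it I would show that $H'$ crosses no edge of $H$: every edge of $H$ joins two barycenters $b_{C'},b_{D'}$ with $C',D'$ dual to $H$, and its direction cannot be the $H$-direction of $D'$ (otherwise the face $C'$ would fail to be dual to $H$, contradicting $b_{C'}\in V_H$); hence its parent is a hyperplane other than $H$, so it is not dual to $H'$. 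Since $H$ is connected, its vertex set $V_H$ is joined by edges of $H$, none of which is dual to $H'$, so $H'$ separates no two vertices of $V_H$, which gives the lemma.

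The contradiction is then immediate. Pick any vertex $b_e\in V_H$, for instance the midpoint of an edge $e$ of $X$ dual to $H$; by the lemma it lies in one halfspace, say $S_1$, of $H'$. Because $f$ inverts $H'$ it exchanges $S_1$ and $S_2$, so $f(b_e)\in S_2$; but $f$ preserves $H$, whence $f(b_e)\in V_H\subseteq S_1$, contradicting $S_1\cap S_2=\emptyset$. Therefore $f$ has no inversion on $X'$. I expect the only delicate point to be the separation lemma of the third step — specifically the bookkeeping showing that no edge of the parent hyperplane $H$ is dual to a child $H'$; the rest is formal once the parent map $\Pi$ is in place.
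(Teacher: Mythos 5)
Your proof is correct, but it takes a genuinely different (and longer) route than the paper's. The paper's argument is purely local: every edge $e$ of $X'$ joins the barycenter of a cube $Q(e)$ of $X$ to the barycenter of one of its codimension-one faces; let ${X'}^+(e)$ be the half-space of $X'$ containing the barycenter of the bigger cube $Q(e)$. A one-square check shows opposite edges of any square of $X'$ define the same half-space, so each hyperplane of $X'$ acquires a canonical side, and since $f({X'}^+(e))={X'}^+(f(e))$, an inversion along $H'$ (which would swap the canonical side with its complement while fixing $H'$) is impossible. Your route instead constructs the parent map $\Pi$ and derives the contradiction from $f$-invariance of the vertex set $V_H$ of the parent hyperplane together with your separation lemma; note that your halfspace $S_1$ containing $V_H$ is exactly the paper's big-cube side ${X'}^+$, so both proofs ultimately exhibit the same invariant datum, but the paper gets it with no global input. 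Your separation lemma, by contrast, tacitly consumes two standard facts that you should cite: (i) connectivity of $H$ as a subcomplex of $X'$ (immediate, since parallelism is generated by elementary parallelisms and each elementary parallelism connects the relevant barycenters through the midcube of a square); and (ii) that a hyperplane is dual to at most one parallel class of edges within each cube, so that ``the $H$-direction of $D'$'' is well defined and the parent of an edge of $H$ is genuinely a hyperplane other than $H$ --- this is precisely the claim, established inside the paper's proof of Theorem~\ref{thm:hyperplanes}(3), that any two edges of a cube dual to $H$ are parallel inside that cube (equivalently, each vertex of $N_H$ lies on a unique edge dual to $H$). You correctly flagged (ii) as the delicate point; with these two citations, plus the easy observation that no vertex of $X'$ lies on a hyperplane of $X'$ (so $V_H$ sits strictly in one open halfspace), your argument is complete. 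What your version buys is the tree-like conceptual picture --- each hyperplane of $X'$ remembers a parent hyperplane of $X$, just as subdividing an inverted edge of a tree produces a fixed midpoint --- whereas the paper's orientation argument is shorter, needs no facts about hyperplanes beyond their separating $X'$, and yields at once that the entire group $\Aut(X)$ acts on $X'$ without inversion.
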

\begin{proof}
Every  edge $e$ of $X'$ joins the center of a cube $Q(e)$ of $X$ to the center of one of its codimension 1 face.
 For each edge $e$ of $X'$,  denote by ${X'}^+(e)$ the strict half-space of $X'$ containing the center of $Q(e)$ but not the center of its codimension 1 face.

For any automorphism $f$ of $X$ and any edge $e$ of $X'$ we have $f({X'}^+(e))={X'}^+(f(e))$.

The Lemma follows because if $e_1,e_2$ are opposite edges of a square of $X'$ then ${X'}^+(e_1)={X'}^+(e_2)$.
\end{proof}

\begin{question}

Is there a finitely  generated group $G$ acting on a locally compact $CAT(0)$ cube complex all of whose finite index subgroups have an inversion ?
\end{question}

\section{Automorphisms preserving a geodesic.}
\label{sec:axisautom}

\begin{prop}\label{prop:ifaxis}

Let $\mathcal G$ denote any graph, and let $\gamma$ denote an infinite combinatorial
geodesic of $\mathcal G$. If an automorphism $f$ of $\mathcal G$ preserves $\gamma$ then
\begin{enumerate}
\item  either $f$ has a fixed point in $\gamma$
\item or $f$ exchanges  two consecutive vertices of $\gamma$
\item or  there is a number $d\in\integers,d\neq 0$   such that
$${\rm for\ every\ } n\in\integers, {\rm we\ have\ } f(p_n)=p_{n+d}$$
and furthermore in that case for every $n\in\integers$ we have $\delta(f)=\delta(f,p_n)=\size{d}$.
\end{enumerate}

\end{prop}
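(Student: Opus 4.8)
The plan is to reduce the whole statement to the classification of $f|_\gamma$ as an automorphism of the bi-infinite path, and then to handle the displacement formula in alternative (3) by a subadditivity argument along the $f$-orbit. Write $\gamma=(p_n)_{n\in\integers}$. Since $f$ preserves $\gamma$ it permutes the vertex set $\{p_n\}$, say $f(p_n)=p_{\phi(n)}$ for a bijection $\phi$ of $\integers$. I would first observe that consecutive vertices of $\gamma$ must go to consecutive vertices: if two vertices $p_i,p_j$ are adjacent in $\mathcal G$ then $d(p_i,p_j)\le 1$, and since $\gamma$ is a geodesic $d(p_i,p_j)=\size{i-j}$, forcing $\size{i-j}=1$. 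Applying this to the edge $\{p_n,p_{n+1}\}$, whose image $\{f(p_n),f(p_{n+1})\}$ is again an edge of $\mathcal G$ joining two vertices of $\gamma$, gives $\size{\phi(n+1)-\phi(n)}=1$ for every $n$. A bijection of $\integers$ with this property is either a translation $\phi(n)=n+d$ or a reflection $\phi(n)=c-n$; this is the familiar description of $\Aut(\gamma)$ as the infinite dihedral group, and is the only genuinely structural input.

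From this the trichotomy is immediate. A translation with $d=0$ fixes every vertex of $\gamma$, giving (1); a translation with $d\neq 0$ is exactly alternative (3). For a reflection $\phi(n)=c-n$: if $c$ is even then $p_{c/2}$ is fixed, giving (1); if $c$ is odd, writing $c=2j+1$ one checks $f(p_j)=p_{j+1}$ and $f(p_{j+1})=p_j$, so $f$ exchanges the two consecutive vertices $p_j,p_{j+1}$, giving (2). This disposes of everything except the displacement formula in (3).

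For that formula I would first record the easy half, $\delta(f,p_n)=d(p_n,p_{n+d})=\size{d}$, which holds because $\gamma$ is a geodesic. The substantive point is that $\size{d}$ is a global minimum, i.e. $d(v,f(v))\ge\size{d}$ for every vertex $v$ of $\mathcal G$. Fix $v$ and set $D=d(v,p_0)$, a constant independent of $k$. Since $f$ is an isometry and $f^k(p_0)=p_{kd}$, for every $k\ge 1$ the triangle inequality $d(p_0,p_{kd})\le d(p_0,v)+d(v,f^k(v))+d(f^k(v),p_{kd})$, together with $d(p_0,p_{kd})=k\size{d}$ and $d(f^k(v),p_{kd})=d(f^k(v),f^k(p_0))=d(v,p_0)=D$, yields $d(v,f^k(v))\ge k\size{d}-2D$. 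On the other hand, since each $f^i$ is an isometry, $d(f^i(v),f^{i+1}(v))=d(v,f(v))$, so subadditivity along the orbit gives $d(v,f^k(v))\le\sum_{i=0}^{k-1}d(f^i(v),f^{i+1}(v))=k\,d(v,f(v))$. Combining, $k\,d(v,f(v))\ge k\size{d}-2D$, hence $d(v,f(v))\ge\size{d}-2D/k$; letting $k\to\infty$ forces $d(v,f(v))\ge\size{d}$, and therefore $\delta(f)=\size{d}=\delta(f,p_n)$. The main obstacle is exactly this lower bound: a single triangle inequality only gives $d(v,f(v))\ge\size{d}-2D$, which is vacuous once $v$ lies far from the axis; the device is to let the orbit travel the distance $k\size{d}$ along $\gamma$ and then divide by $k$, so that the bounded error $2D$ becomes negligible. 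Everything else is routine dihedral bookkeeping.
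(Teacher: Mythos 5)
Your proof is correct and takes essentially the same route as the paper's: the same reduction of $f|_\gamma$ to a bijection $\phi$ of $\integers$ with $\size{\phi(n+1)-\phi(n)}=1$, hence a translation or reflection, followed by the same limiting argument that combines subadditivity $d(v,f^k(v))\le k\, d(v,f(v))$ with the triangle-inequality bound $d(v,f^k(v))\ge k\size{d}-2d(v,p_0)$ and lets $k\to\infty$. The only difference is cosmetic: you spell out (via the geodesic property of $\gamma$) why consecutive vertices map to consecutive ones, a step the paper leaves implicit.
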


 Note that the last property shows that for any other $f$-invariant
geodesic $\gamma'$, the translation length of $f$ on $\gamma'$ is
$\delta(f)$ too. Note also that when $\mathcal G$ is the 1-skeleton of a cubing, the second possibility in the Lemma above corresponds to an inversion. We thus get:

\begin{cor}\label{cor:ifaxis} Let $X$ denote a cubing.
 Assume that $f\in{\rm Aut}(X)$ acts without inversion
and is combinatorially hyperbolic. Then $f$ has the same translation length $d$ on each axis, and in fact $d=\delta(f)$.
Furthermore for any integer $n> 0$ the automorphim $f^n$ is hyperbolic, each axis for $f$ being an axis for $f^n$, and we have $\delta(f^n)=n\delta(f)$.
\end{cor}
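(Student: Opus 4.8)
The plan is to derive the whole statement from Proposition~\ref{prop:ifaxis} applied to the graph $\mathcal G=X^1$, whose vertices, edges and combinatorial distances are those of $X$. Since $f$ is combinatorially hyperbolic, it is not elliptic and preserves some infinite combinatorial geodesic $\gamma=(p_n)_{n\in\integers}$ on which it acts as a non-trivial translation. First I would feed $f$ and this axis $\gamma$ into Proposition~\ref{prop:ifaxis}. Its first alternative, a fixed point in $\gamma$, is excluded because $f$ translates $\gamma$ non-trivially; its second alternative, $f$ exchanging two consecutive vertices of $\gamma$, is excluded because $f$ acts without inversion, using the remark following Proposition~\ref{prop:ifaxis} that on the $1$-skeleton of a cubing this possibility is precisely an inversion. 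Hence the third alternative holds: there is $d\neq0$ with $f(p_n)=p_{n+d}$ for all $n$, and moreover $\delta(f)=\delta(f,p_n)=\size{d}$.

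The decisive gain here is the equality $\size{d}=\delta(f)$, identifying the translation length of $f$ on the axis with the global combinatorial translation length. Because this reasoning applies verbatim to \emph{any} axis of $f$, the translation length of $f$ on every axis equals the single number $\delta(f)$; this is the first assertion (with $d=\delta(f)$, the sign of $d$ being merely a choice of orientation of the geodesic).

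For the assertion on powers, I would fix an axis $\gamma=(p_n)_{n\in\integers}$ of $f$ and use the relation $f(p_n)=p_{n+d}$ to compute directly that $f^n(p_k)=p_{k+nd}$ for every $k$. For $n>0$ we have $nd\neq0$, so $f^n$ preserves $\gamma$ and acts on it as a non-trivial translation; in particular $\gamma$ is an axis for $f^n$. Applying Proposition~\ref{prop:ifaxis} to $f^n$ and $\gamma$, the explicit formula $f^n(p_k)=p_{k+nd}$ shows that the first alternative (which forces $nd=0$) and the second (which, for a translation, would force $nd=1$ and $nd=-1$ at once) cannot occur, so we are again in the third alternative and obtain $\delta(f^n)=\size{nd}=n\size{d}=n\delta(f)$. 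Since then $\delta(f^n)>0$, the automorphism $f^n$ has no fixed vertex and so is not elliptic; together with the axis $\gamma$ this shows $f^n$ is hyperbolic, finishing the argument.

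I expect that no step is a genuine obstacle, since all the substantive work is carried by Proposition~\ref{prop:ifaxis}; the only point requiring a little care is the power step, where one must notice that the explicit translation formula $f^n(p_k)=p_{k+nd}$ already rules out the fixed-point and inversion-type alternatives for $f^n$, so that one need not separately assume $f^n$ acts without inversion.
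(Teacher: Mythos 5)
Your proof is correct and follows essentially the same route as the paper, which derives Corollary~\ref{cor:ifaxis} directly from Proposition~\ref{prop:ifaxis} via the two remarks preceding it (the second alternative being an inversion on the $1$-skeleton of a cubing, and the third alternative giving $\delta(f)=\size{d}$ on every axis). Your observation that the explicit formula $f^n(p_k)=p_{k+nd}$ places $f^n$ in the third alternative without needing any inversion hypothesis on $f^n$ is exactly the point implicit in the paper's statement.
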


\begin{proof}[Proof of Lemma~\ref{lem:ifaxis}]

  There is a bijection $\phi:\integers\to\integers$ such that
$f(p_n)=p_{\phi(n)}$. Since $f$ is an automorphism we have
$\size{\phi(n+1)-\phi(n)}=1$. Thus $\phi(n)=d+\varepsilon n$, with
$\varepsilon\in\{-1,1\}$.

 Assume first $\varepsilon=-1$. Either $d$ is even: then $f(p_{d\over
2})=p_{d\over 2}$. Or $d$ is odd: then $f$ echanges the adjacent vertices
$p_{d-1\over 2},p_{d+1\over 2}$.

 Assume now $\varepsilon=1$. If $d=0$ then $f$ fixes each point of
$\gamma$. Else for every $n\in\integers$ we  have $f(p_n)=p_{n+d}$. Let us
prove in this case that  for every $n\in\integers$ we have
$\delta(f)=\delta(f,p_n)=\size{d}$.

 Up to replacing $({p}_n)_{n\in\integers}$ by
$({p}_{-n})_{n\in\integers}$, we may and will assume that $d>0$.

 So $f$ acts on $\gamma$ as a translation of length $d$. Let $x$ denote any
vertex of $\mathcal G$. Then
$d(p_0,f^n(p_0))=nd$ and by the triangle inequality $nd(x,f(x))\ge d(x,f^n(x))\ge
d(p_0,f^n(p_0)) - d(p_0,x)-d(f^n(p_0),f^n(x))= nd - 2d(p_0,x)$. If we devide by $n$ and let
$n$ tend to infinity we obtain the desired inequality $d(x,f(x))\ge d$.

\end{proof}

The previous argument on arbitrary graphs  is due to Thomas Elsner. Initially we had proven a more precise result only valid for automorphisms of $CAT(0)$ cube complexes:

\begin{lem}\label{lem:ifaxis}
Let $X$ denote a cubing and let $f$ be an automorphism of $X$. Assume that $f$ preserves an infinite geodesic $(p_n)_{n\in\integers}$ and $f$ acts by translation $p_n\mapsto p_{n+d}$ on $\gamma$ (with $d\in\naturals^*$).
Then for every vertex $p$ there exists an infinite geodesic $(q_n)_{n\in\integers}$ preserved by $f$, on which $f$ acts by $q_n\mapsto q_{n+d}$, and such that for all $n\in\naturals$ we have $d(p,q_n)=d(q,p_0)+n$. In particular $d(p,f(p))\ge d$, so that $\delta(f)=d$.
\end{lem}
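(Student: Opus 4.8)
The plan is to build the required axis as the image of $p$ under combinatorial projection onto the convex hull of $\gamma$, controlling everything through hyperplanes via Theorem~\ref{thm:combgeod}. First I would fix notation: let $H_m$ be the hyperplane dual to the edge $[p_{m-1},p_m]$, with $H_m^+$ the halfspace containing $p_m$ (hence every $p_k$ with $k\ge m$) and $H_m^-$ the one containing $p_{m-1}$. Since $\gamma$ is a geodesic, Theorem~\ref{thm:combgeod} shows the $H_m$ are pairwise distinct and are exactly the hyperplanes crossing $\gamma$; from $f(p_n)=p_{n+d}$ we get $f(H_m)=H_{m+d}$ and $f(H_m^{\pm})=H_{m+d}^{\pm}$. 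Call these the \emph{transverse} hyperplanes. Let $\Gamma$ be the convex hull of $\gamma$, i.e.\ the intersection of all halfspaces containing $\gamma$; it is convex and $f$-invariant. A hyperplane $K$ crosses $\Gamma$ iff $\gamma$ fails to lie in one halfspace of $K$, i.e.\ iff $K$ crosses $\gamma$; so the hyperplanes crossing $\Gamma$ are exactly the $H_m$, and every other hyperplane has all of $\Gamma$ on one side.

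Next I would use the combinatorial projection (gate) onto a convex subcomplex, which follows from Theorem~\ref{thm:combgeod}: for a vertex $x$ and a convex $C$ there is a unique $g_C(x)\in C$ with $d(x,y)=d(x,g_C(x))+d(g_C(x),y)$ for all $y\in C$, the hyperplanes separating $x$ from $g_C(x)$ being exactly those separating $x$ from all of $C$. Put $\bar p=g_\Gamma(p)$. By the previous step the hyperplanes separating $p$ from $\bar p$ are all non-transverse, whereas any geodesic lying in $\Gamma$ crosses only transverse ones; these two families are disjoint, so for any vertex $y\in\Gamma$ the concatenation of a geodesic $p\to\bar p$ with a geodesic $\bar p\to y$ inside $\Gamma$ crosses pairwise distinct hyperplanes and is therefore geodesic. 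In particular $\bar p$ is the gate of $p$ onto any geodesic of $\Gamma$ through $\bar p$.

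It remains to produce, inside $\Gamma$, a bi-infinite geodesic $(q_n)_{n\in\integers}$ through $q_0:=\bar p$ with $f(q_n)=q_{n+d}$. Since no non-transverse hyperplane separates two points of $\Gamma$, the distance $d(\bar p,f(\bar p))$ equals the number of transverse hyperplanes separating $\bar p$ from $f(\bar p)$. Writing $s_m=+$ when $\bar p\in H_m^+$, one has $s_m=+$ for $m\ll 0$ and $s_m=-$ for $m\gg 0$ (because $d(\bar p,p_0)<\infty$), and $d(\bar p,f(\bar p))=\#\{m:s_m\ne s_{m-d}\}$; a telescoping count gives $d(\bar p,f(\bar p))\ge d$, with equality precisely when there is no $m$ with $\bar p\in H_m^+$ and $\bar p\in H_{m-d}^-$, that is when $H_m^+\subseteq H_{m-d}^+$ for all $m$. \textbf{The heart of the proof, and the step I expect to be the main obstacle, is exactly this nesting}: that $f$ \emph{skewers} the hyperplanes dual to $\gamma$, i.e.\ $H_m$ and $f(H_m)=H_{m+d}$ are disjoint. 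Granting it, $d(\bar p,f(\bar p))=d$, and concatenating a geodesic $\bar p\to f(\bar p)$ (which crosses exactly $d$ transverse hyperplanes) with all its $f$-translates yields a path whose crossed hyperplanes are all distinct, hence the sought bi-infinite geodesic on which $f$ acts by $q_n\mapsto q_{n+d}$. I would establish the skewering from the hypothesis that $\gamma$ is a geodesic preserved by $f$ with $f(p_n)=p_{n+d}$: a crossing of $H_m$ with $f(H_m)$ would propagate along the whole forward orbit $H_m,H_{m+d},H_{m+2d},\dots$, and one derives a contradiction with $\gamma$ being geodesic (equivalently, produces a vertex of $H_m^-\cap f(H_m^+)$, which an exchange argument using convexity of the halfspaces rules out).

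Finally I would conclude. Combining the two projection observations, for every $n\in\integers$ the path $p\to q_0\to q_n$ is geodesic, so $d(p,q_n)=d(p,q_0)+\size{n}$; in particular $d(p,q_n)=d(p,q_0)+n$ for $n\ge 0$, as required. Applying $f$ to this identity and using $f(q_n)=q_{n+d}$ gives $d(f(p),q_{n+d})=d(p,q_n)=d(p,q_0)+n$, so for $m\ge d$ we have $d(f(p),q_m)=d(p,q_0)+m-d$; the triangle inequality then yields $d(p,f(p))\ge d(p,q_m)-d(f(p),q_m)=d$. Since $d(p_0,f(p_0))=d$, this gives $\delta(f)=d$ (one may instead invoke Proposition~\ref{prop:ifaxis} for the equality $\delta(f)=d$, once the invariant geodesic $(q_n)$ has been produced).
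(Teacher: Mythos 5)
Your route (gate onto the convex hull of $\gamma$, then count transverse hyperplanes) is genuinely different from the paper's, but it breaks exactly at the step you flag as the heart: the skewering claim is \emph{false} under the hypotheses of the lemma. Take $X$ the standard square tiling of the plane, $f(x,y)=(y+1,x)$ (a glide reflection preserving the grid), and the staircase geodesic $p_{2k}=(k,k)$, $p_{2k+1}=(k+1,k)$; then $f(p_n)=p_{n+1}$, so $d=1$, and $f$ even acts stably without inversion (no power of $f$ preserves a hyperplane). Here $H_1$ is the vertical hyperplane $x=1/2$, and $H_2=f(H_1)$ is the horizontal hyperplane $y=1/2$: they cross, so the halfspaces are not nested, and your hoped-for contradiction with $\gamma$ being geodesic cannot exist, since all hypotheses are satisfied. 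The example is equally fatal to the surrounding construction: \emph{every} hyperplane of $X$ crosses $\gamma$, so the convex hull $\Gamma$ is all of $X$ and the gate is the identity; for $p=(0,k)$ one gets $d(\bar p,f(\bar p))=d(p,f(p))=2k+1$, unbounded, so the claimed equality $d(\bar p,f(\bar p))=d$ fails badly, and there is no invariant geodesic through $q_0:=\bar p$ at all (one checks the only $f$-invariant geodesic here is the staircase itself). So ``build the axis through the gate of $p$ onto the hull of one fixed axis'' is unrepairable: the hull of a single axis can be the whole complex, and the axis the lemma promises is obtained only by moving within the family of invariant geodesics, not by projecting onto one of them.

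Two further remarks. First, the portion of your argument that does work is worth keeping: the telescoping count makes sense for an arbitrary vertex $x$ (set $s_m(x)=+$ iff $x\in H_m^+$; the sequence is $+$ for $m\ll 0$, $-$ for $m\gg 0$, and $H_m$ separates $x$ from $f(x)$ iff $s_m\neq s_{m-d}$, giving at least one sign change in each residue class mod $d$). This yields $d(x,f(x))\ge d$ for \emph{every} vertex, hence $\delta(f)=d$, with no gates and no nesting --- a clean alternative to the limiting argument in Proposition~\ref{prop:ifaxis} --- but it does not produce the radial invariant geodesic, which is the actual content of Lemma~\ref{lem:ifaxis}. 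Second, contrast with the paper's mechanism, which is where the real work happens: among all $f$-invariant geodesics translated by $d$, it chooses one at minimal distance from $p$, takes a closest vertex $q_0$ on it, and shows that a geodesic $p\to q_0$ followed by $d$ steps along the axis is geodesic; if not, a hyperplane $H$ crosses the concatenation twice, and using convexity of $N_H$ and the reflection $\sigma_H$ of Theorem~\ref{thm:hyperplanes} it reflects an initial segment of the axis across $H$ and splices with $f$-translates to manufacture a new invariant geodesic strictly closer to $p$ --- a contradiction. That exchange-and-reflect argument is precisely what substitutes for your (false) nesting claim.
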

\begin{proof}

Consider the set $\Gamma_d$ of all
combinatorial geodesics $\gamma'=({p'}_n)_{n\in\integers}$ such that
$f(\gamma')=\gamma'$, and $f$ acts on $\gamma'$ by
$f({p'}_n)={p'}_{n+d}$. Pick a vertex $p$ of $X$. By assumption
$\Gamma_d$ is not empty, so there is a geodesic $\gamma\in\Gamma_d$
minimizing the distance $d(p,\gamma')$. To simplify notations we denote
such a geodesic of $\Gamma_d$  by $\gamma=({q}_n)_{n\in\integers}$.

 Let $q_n$ a vertex of $\gamma$ such that $d(p,q_n)\le d(p,q_m)$ for all
integers $m\in\integers$. Up to shifting indices we may assume that $n=0$.  Consider a geodesic $\gamma'$ joining $p$ to
$q_n$. Assume that there exists an integer $n\le m\le n+d$ such that the
path $\gamma'(q_n,\dots,q_m)$ is not geodesic. Observe that necessarily
$n<m$. Choose a minimal such integer $m$.

By Theorem~\ref{thm:combgeod}   there is a hyperplane $H$ crossing $\gamma'(q_n,\dots,q_m)$  at least twice. By minimality of $m$ the path
  $\gamma'(q_n,\dots,q_{m-1})$ is geodesic. Hence again by Theorem~\ref{thm:combgeod} the  hyperplane  $H$ separates
  $\{q_{m-1},q_m\}$ and crosses
$\gamma'(q_n,\dots,q_{m-1})$  
   exactly once. In fact $H$ cannot cross $(q_n,\dots,q_{m-1})$ since
  $(q_n,\dots,q_{m})$ is geodesic.

 Then the maximal subpath $\gamma_H$ of $\gamma'(q_n,\dots,q_m)$
containing $q_n$ and not crossed by $H$ is contained in the geodesic
$\gamma'(q_n,\dots,q_{m-1})$ so  $\gamma_H$  has to be geodesic. 

 Observe that by maximality the endpoints of $\gamma_H$ are in $N_H$;
the terminal point is $q_{m-1}$. By Theorem~\ref{thm:hyperplanes}(\ref{convneighbourhood}), $N_H$ is  convex, thus we have in fact
$\gamma_H\subset N_H$. In particular $(p_n,\dots,p_{m-1})\subset N_H$.
Let $({q'}_n,\dots,{q'}_{m-1})\subset N_H$ denote  the symmetric of
$(q_n,\dots,q_{m-1})$ with respect to $H$ (note that
${q'}_{m-1}=q_{m+1}$). Then $q_n$ and ${q'}_n$ are adjacent by an edge
dual to $H$. The  hyperplane  $H$ crosses twice the  path $\gamma'(q_n,{q'}_n)$.
Thus (by Theorem~\ref{thm:combgeod})  we have $d(q,{q'}_n)<d(q,q_n)$.

 The product path
$(q_n,{q'}_n)({q'}_n,\dots,{q'}_{m-1})(q_{m},\dots,q_{n+d})$ is still a
geodesic joining $q_n$ to $q_{n+d}=f(q_n)$. We denote by $({q''}_i)_{n\le
i\le n+d}$ the vertices of this path. For any integer  $k\in\integers$ we
define ${q''}_k=f^q({q''}_r)$ with $q,r$ uniquely defined by $n\le r<n+d$
and $k=r+qd$. Then $\gamma'=({q''}_n)_{n\in\integers}$  is a well-defined
infinite path because $f(q_n)=q_{n+d}$. Note that if $k\equiv n\ [d]$
then $q_k={q''}_k$. So $\gamma'$ is an infinite geodesic because it has a
geodesic interpolation.

 By construction $f(\gamma')=\gamma'$ and $f({q''}_n)={q''}_{n+d}$. Thus
$\gamma'\in\Gamma_d$. This contradicts the minimality of $d(p,\gamma)$.

 It follows from the previous argument that $\gamma'(q_n,\dots,q_{n+d})$
is  geodesic. Thus $d(p,q_{n+d})=d(p,q_n)+d$. By triangular inequality
$d(p,f(p))\ge d(p,q_{n+d})-d(q_{n+d},f(p))=(d(p,q_n)+d)- d(f(q_n),f(p))
=d$. Thus $\delta(f,p)\ge d$ and clearly $d=\delta(f,p)$ for any vertex
of a geodesic $\gamma\in\Gamma_d$.

\end{proof}

\section{Classification of automorphisms acting stably without inversion.}
\label{sec:classification}

Our main technical result is the following:
\begin{lem}\label{lem:axis}

 Let $f$  denote an automorphism of some $CAT(0)$ cube complex $X$. Let
$p$ denote a vertex of $X$ such that $\delta(f)=\delta(f,p)$. If $f$
and each power of $f$  act
without inversion then for any integer $n\ge 0$ we have  $d(p,f^n(p))=n\delta(f)$.

\end{lem}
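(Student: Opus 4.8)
The plan is to translate everything into the language of hyperplanes via Theorem~\ref{thm:combgeod}, so that $d(x,y)$ is the number of hyperplanes separating $x$ and $y$. Write $\delta=\delta(f)=\delta(f,p)=d(p,f(p))$ and let $\mathcal W$ be the set of the $\delta$ hyperplanes separating $p$ from $f(p)$, each oriented so that $p\in H^-$ and $f(p)\in H^+$. Since $f$ is an isometry, every vertex $f^j(p)$ is again a minimizer ($\delta(f,f^j(p))=\delta$), and the hyperplanes separating $f^j(p)$ from $f^{j+1}(p)$ are exactly $f^j(\mathcal W)$. I would reduce the lemma to the following persistence statement: for each $H\in\mathcal W$ one has $f^i(p)\in H^+$ for every $i\ge 1$. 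Granting this and its $f^j$-translates, suppose $H\in f^j(\mathcal W)\cap f^{j'}(\mathcal W)$ with $j<j'$; then the partition of the orbit induced by the single hyperplane $H$ would have to cut both between $f^j(p),f^{j+1}(p)$ and between $f^{j'}(p),f^{j'+1}(p)$, which is impossible. Hence the sets $\mathcal W,f(\mathcal W),\dots,f^{n-1}(\mathcal W)$ are pairwise disjoint, and each of their $n\delta$ members separates $p=f^0(p)$ from $f^n(p)$. This gives $d(p,f^n(p))\ge n\delta$, and the reverse inequality is the triangle inequality, so $d(p,f^n(p))=n\delta$.

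It remains to prove persistence, which is the real content. I would argue by contradiction: suppose some $f^i(p)$ returns to the negative side, and choose, among all hyperplanes and all basepoints $f^j(p)$, a return \emph{excursion} of minimal length $k$; after translating by a power of $f$ we may assume $p\in H^-$, that $f(p),\dots,f^{k-1}(p)\in H^+$, and that $f^k(p)\in H^-$, with $k\ge 2$. Minimality of $k$ forces the subpath $f(p)\to\cdots\to f^{k-1}(p)$ to be geodesic and, more generally, forbids any repeated hyperplane strictly inside the excursion. Let $[a,a']$ and $[b',b]$ be the edges dual to $H$ on chosen geodesics $[p,f(p)]$ and $[f^{k-1}(p),f^k(p)]$, with $a,b\in H^-$ and $a',b'\in H^+$; their endpoints lie in $N_H$. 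The portion of the orbit path running between these two edges is then a geodesic lying in $H^+$ with endpoints $a',b'\in N_H$, so by the convexity of $N_H$ (Theorem~\ref{thm:hyperplanes}(\ref{convneighbourhood})) it lies entirely in $N_H$. Applying the reflection $\sigma_H$ of Theorem~\ref{thm:hyperplanes}(\ref{symmetry}) folds this portion across $H$ into a geodesic of the same length in $H^-$ joining $a$ to $b$. Carrying out the same folding across the whole family of translates $\{f^{j}(H)\}$ should yield an $f$-invariant bi-infinite path whose length per period is strictly less than $\delta$, that is, a vertex $q$ with $d(q,f(q))<\delta=\delta(f)$, contradicting the minimality of $\delta(f)$.

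The delicate step, and the one where the hypotheses really enter, is this last folding. Getting the excursion into $N_H$ so that $\sigma_H$ applies at all is what forces the choice of a \emph{minimal} excursion together with the convexity of $N_H$; and turning a single local fold into a globally $f$-invariant shortening requires that the reflections across the translates $f^j(H)$ be performed coherently. This coherence is exactly what acting \emph{stably without inversion} secures: if some power $f^m$ inverted a hyperplane in the orbit of $H$ (the pathology of Example~\ref{exmp:inversion}), the fold would merely interchange the two sides rather than produce a genuine shortcut, and no vertex of smaller displacement would result. I expect the bulk of the work to lie in making this equivariant folding precise and in verifying that it strictly decreases the period length; the hyperplane bookkeeping of the first paragraph is comparatively routine.
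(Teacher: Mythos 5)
Your opening reduction (persistence of hyperplane sides plus the counting via Theorem~\ref{thm:combgeod}) is sound, and your excursion-plus-folding setup runs parallel to the paper's proof: minimal counterexample, a doubly-crossed hyperplane $H$, convexity of $N_H$ (Theorem~\ref{thm:hyperplanes}(\ref{convneighbourhood})), and the reflection $\sigma_H$. But the step you defer --- ``equivariant folding yields an $f$-invariant path of period strictly less than $\delta$'' --- is precisely the step that cannot be carried out, and it is not where the paper goes. A fold across $H$ replaces the initial vertex $p$ by an adjacent vertex $q$ whose displacement is \emph{exactly} $\delta$, never less: the reflected segment together with the edge $(q,p)$ has length $\delta$, and since $\delta=\delta(f)$ is the infimum over all vertices, no vertex of displacement $<\delta$ exists, invariant path or not. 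So the contradiction has to be extracted by a different mechanism. The paper's mechanism is to track a \emph{second} hyperplane $K$, dual to the new edge $(q,p)$: minimality of $n$ makes $\gamma''f(\gamma'')\cdots f^{n-2}(\gamma'')$ geodesic, so $K$ separates $q$ from $f^{n-1}(q)$; if $K$ also separated $f^{n-1}(p)$ from $f^{n-1}(q)$ then $K$ would be dual to both $a$ and $f^{n-1}(a)$, forcing $f^{n-1}(K)=K$ and an inversion of $f^{n-1}$ along $K$ --- this is the one precise place where ``stably without inversion'' enters, quite unlike your guessed role for it (``coherence of folds''). Hence $K$ separates $q$ from $f^{n-1}(p)$, so $H=f(K)$ separates $f(q)$ from $f^{n}(p)$, hence does not separate $f(p)$ from $f^{n}(p)$, contradicting that $H$ crosses the geodesic $\gamma_1\cdots\gamma_{n-1}$ exactly once. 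Nothing in your sketch produces this finer hyperplane bookkeeping, and no strict-shortening argument can substitute for it.

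There is also a secondary gap in your folding setup: minimizing only the excursion length $k$ does not make the portion between the two edges dual to $H$ geodesic. Two distinct hyperplanes can have excursions of the same minimal length $k$ (one crossing the tail of $[p,f(p)]$ and the head of $[f^{k-1}(p),f^k(p)]$, nested inside the other), in which case the middle portion is doubly crossed and you cannot place it in $N_H$ by convexity. The paper repairs exactly this by a second, inner minimization: among all hyperplanes crossing the concatenated path twice, choose the one whose middle maximal uncrossed subpath $\gamma_H$ is shortest, which forces $\gamma_H$ to be geodesic. Your minimal-$k$ choice does correctly ground the geodesicity of the inner orbit path $f(p)\to\cdots\to f^{k-1}(p)$ (a repeated hyperplane there would give a strictly shorter excursion), so that part of your sketch is fine; but without the inner minimization, and above all without a replacement for the $K$-inversion argument, the proof does not close.
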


 The condition ``without inversion'' is necessary in view of
Example~\ref{exmp:inversion}. The condition ``stably without inversion'' is also necessary:
consider an order four rotation of a square. 
\begin{proof}
We may assume $\delta(f)>0$. We then argue  by contradiction. So assume that there is a vertex $p$ and a positive integer $n$ such that $\delta(f)=d(p,f(p))$ and $d(p,f^n(p))\neq n\delta(f)$. By triangular inequality we always have $d(p,f^n(p))\le n\delta(f)$. Thus 
$d(p,f^n(p))\neq n\delta(f)$ means $d(p,f^n(p))<n\delta(f)$. Consider a pair $(p,n)$ with the smallest possible  integer $n$. Observe that $n\ge 2$.

 Fix a combinatorial geodesic $\gamma_0=(p_0=p,p_1,\dots,p_\delta=f(p))$.
Set $\gamma_i=f^i(\gamma_0)$. The path
$\gamma=\gamma_0\gamma_1\dots\gamma_{n-1}$ joins $p$ to $f^n(p)$, and is
not  a geodesic. Thus by Theorem~\ref{thm:combgeod} there exists a  hyperplane  $H$ of $X$ crossing at least
twice the path $\gamma$.

 By minimality of $n$ the subpaths $\gamma_0\gamma_1\dots\gamma_{n-2}$
and $\gamma_1\gamma_2\dots\gamma_{n-1}$ are geodesic. Thus by Theorem~\ref{thm:combgeod} the hyperplane $H$ crosses these
subpaths of $\gamma$ only once. This implies that $H$ crosses $\gamma_0$
once, $H$  crosses $\gamma_{n-1}$ once and $H$ does not cross at all the
subpath $\gamma_1\gamma_2\dots\gamma_{n-2}$.

The path $\gamma$ contains three maximal subpaths not crossed by $H$. Two of these subpaths contain the endpoints of $\gamma$ and we let $\gamma_H$ denote the third one (in the middle). We now choose the  hyperplane  $H$ crossing twice $\gamma$ such that the
length of $\gamma_H$ is minimal. Then we claim that the corresponding
subpath $\gamma_H$ is a geodesic: for else there would be a  hyperplane  $H'$
crossing twice $\gamma_H$, hence also $\gamma$, and we would have
$\gamma_{H'}\subset\gamma_{H},\gamma_{H'}\neq\gamma_{H}$, contradicting
the minimality of the length of $\gamma_H$.

 Note that by maximality of the subpath $\gamma_H$ the endpoints of
$\gamma_H$ are in $N_H$ (not separated by $H$). By Theorem~\ref{thm:hyperplanes}(\ref{convneighbourhood}) the neighbourhood  $N_H$ is convex, thus
we have $\gamma_H\subset N_H$. Let $\sigma_H$ denote the path of $N_H$
symmetric to $\gamma_H$ with respect to $H$. The initial vertex of
$\sigma_H$ is one of the vertices $p_i$ of $\gamma_0$. Let $q'$ denote
the vertex of $\sigma_H$ adjacent to $f(p)$; the edge joining $q'$ to
$f(p)$ is dual to $H$. We let $q$ denote the vertex adjacent to $p$ such
that $f(q)=q'$, and we denote by $K$ the  hyperplane  dual to the edge $a$ joining
$q$ and $p$.

By symmetry inside $N_H$ (see Theorem~\ref{thm:hyperplanes}(\ref{symmetry})) we see that the vertex
$f(q)$ is on a geodesic from $p$ to $f(p)$. Let  $\gamma'$ denote the part of this geodesic from $p$ to $f(q)$: it has length $\delta(f)-1$. Consider now the path $\gamma''=(q,p)\gamma'$. The length of this path is $\delta(f)$, and it joins $q$ to $f(q)$. Thus in fact $d(q,f(q))=\delta(f)$ and $(q,p)\gamma'$ is a geodesic. In particular the  hyperplane  $K$ separates $\{ q,f(q)\}$.
 
Consider now the product path $\gamma''f(\gamma'')\cdots f^{n-2}(\gamma'')$ joining $q$ to $f^{n-1}(q)$, and of length $(n-1)\delta(f)$. Since $d(q,f(q))=\delta(f)$, by  minimality of $n$ we see that $\gamma''f(\gamma'')\cdots f^{n-2}(\gamma'')$ has to be a geodesic. In particular  the  hyperplane  $K$ separates $\{ q,f^{n-1}(q)\}$.
 
We claim that in fact $K$ separates $\{ q,f^{n-1}(p)\}$. Otherwise $K$  separates $f^{n-1}(p)$ and $f^{n-1}(q)$. Thus $K$ is dual to the edge $f^{n-1}(a)$. Since $K$ is also dual to $a$ we   get $f^{n-1}(K)=K$. But since $K$ separates $\{q,f^{n-1}(q)\}$ we see that $f^{n-1}$ has an inversion along $K$, contradiction.

Since $K$ separates $\{ q,f^{n-1}(p)\}$, when we apply $f$ we see that $H$ separates $\{ f(q),f^{n}(p)\}$. Thus $H$ does not separate $\{ f(p),f^{n}(p)\}$. This is a contradiction, because, as we already noticed, the path $\gamma_1\gamma_2\dots\gamma_{n-1}$ is a geodesic, and it is crossed exactly once by $H$.

\end{proof}

\begin{cor}\label{cor:delta}
 Assume that $f\in{\rm Aut}(X)$ acts stably without inversion and has no fixed point. Then  $f$ is combinatorially hyperbolic. More precisely $f$  has an axis through each vertex $p$ minimizing $d(p,f(p))$. For any
integer $n\ge 0$, each axis for $f$ is an axis for $f^n$ and $\delta(f^n)=n\delta(f)$.
\end{cor}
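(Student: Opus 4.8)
The plan is to produce an explicit $f$-invariant bi-infinite combinatorial geodesic through a minimizing vertex $p$, by iterating $f$ on a single geodesic segment from $p$ to $f(p)$, and then to read off the statements about powers directly from Corollary~\ref{cor:ifaxis}.

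First I would record two elementary reductions. Since $f$ has no fixed vertex and combinatorial distances are non-negative integers, the infimum $\delta(f)=\inf_p\delta(f,p)$ is in fact a positive minimum; fix a vertex $p$ attaining it, so that $\delta(f,p)=\delta(f)\ge 1$. Because $f$ is a combinatorial isometry, every iterate $f^k(p)$ also attains the minimum: $\delta(f,f^k(p))=d(f^k(p),f^{k+1}(p))=d(p,f(p))=\delta(f)$. This observation is what lets me apply Lemma~\ref{lem:axis} with any of the shifted basepoints $f^k(p)$, and it is really the only new idea required.

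Next I would assemble the candidate axis. Fix a combinatorial geodesic $\gamma_0$ from $p$ to $f(p)$ and set $\gamma_k=f^k(\gamma_0)$, a geodesic of length $\delta(f)$ from $p_k:=f^k(p)$ to $p_{k+1}$. Concatenating the $\gamma_k$ for $k\in\integers$ produces a bi-infinite combinatorial path $P$ with $f(P)=P$ and $f$ acting on $P$ as the shift $p_k\mapsto p_{k+1}$, i.e.\ a translation by $\delta(f)$ in the path parameter. To see that $P$ is a genuine geodesic it suffices, since any subpath of a geodesic is itself a geodesic, to check that for every $N\ge 0$ the segment $\gamma_{-N}\cdots\gamma_{N-1}$ of $P$, which has length $2N\delta(f)$ and joins $p_{-N}$ to $p_N$, is geodesic. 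Applying Lemma~\ref{lem:axis} to $f$ at the minimizing basepoint $p_{-N}$ with exponent $2N$ yields exactly $d(p_{-N},p_N)=d(p_{-N},f^{2N}(p_{-N}))=2N\delta(f)$, so this segment realizes the distance between its endpoints and is therefore geodesic. As every finite subpath of $P$ lies inside one such segment, $P$ is a bi-infinite geodesic; and since $\delta(f)>0$, the map $f$ is combinatorially hyperbolic with axis $P$ passing through $p$.

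Finally I would deduce the statements about powers. Now that $f$ is combinatorially hyperbolic and acts without inversion (it acts stably without inversion, hence a fortiori without inversion), Corollary~\ref{cor:ifaxis} applies verbatim: for each integer $n\ge 1$ the automorphism $f^n$ is hyperbolic, every axis of $f$ is an axis of $f^n$, and $\delta(f^n)=n\delta(f)$; the case $n=0$ is trivial. The main obstacle is the geodesicity of $P$, and the single device that unlocks it is the remark that all iterates $f^k(p)$ are minimizers, which converts Lemma~\ref{lem:axis} into the additivity $d(p_{-N},p_N)=2N\delta(f)$ needed to splice the forward and backward rays into one bi-infinite geodesic.
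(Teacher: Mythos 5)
Your proof is correct and follows essentially the same route as the paper: both construct the candidate axis by concatenating the $f$-translates of a single geodesic from a minimizing vertex $p$ to $f(p)$, both verify geodesicity of arbitrarily long subsegments by applying Lemma~\ref{lem:axis} at the shifted minimizing basepoints $f^k(p)$, and both conclude via Corollary~\ref{cor:ifaxis}. The one point you make explicit that the paper leaves implicit --- that the infimum $\delta(f)$ is attained because distances are positive integers, and that every $f^k(p)$ is again a minimizer --- is exactly the observation the paper uses when it notes the displacement function achieves its minimum at each $p_{kd}$.
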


\begin{proof}

 Let $p$ denote any of the vertices of $X$ such that $d(p,f(p))=\delta(f)$. For brevity we write $\delta(f)=d$.  
 
Let $\gamma_0=(x_0,x_1,\dots,x_d)$ denote any combinatorial geodesic from $p$ to $f(p)$ (so that in particular $x_d=f(x_0)$).
For any integer  $k\in\integers$ we
define ${p}_k=f^q({x}_r)$ with $q,r$ uniquely defined by $0\le r<d$
and $k=r+qd$. Note that for $k=0,1,\dots,d$ we have $p_k=x_k$, and for an arbitrary $k$ we have $f(p_k)=p_{k+d}$. Thus $\gamma=({p_k})_{k\in\integers}$  is an 
infinite path.  The map $x\mapsto d(x,f(x))$ achieves its minimal value at $p=p_0$, and thus at each vertex  $p_{kd},k\in\integers$. By Lemma~\ref{lem:axis} it follows that  the finite subpath $({p_k})_{k_1d\le k\le k_2d}$  is a geodesic (for any pair $(k_1,k_2)\in\integers^2$ with $k_1\le k_2$). Thus $\gamma$ is an infinite geodesic, and
by construction $\gamma$ is invariant under $f$.

We conclude by applying Corollary~\ref{cor:ifaxis}.

\end{proof}

We have proved:

 \begin{thm}\label{thm:noparabolic} Every automorphism of a $CAT(0)$ cube complex acting stably without
inversion is either  combinatorially elliptic or  combinatorially
hyperbolic. 

\end{thm}

\begin{rem}

 Let $f$ denote an automorphism of a $CAT(0)$ cube complex $X$. Assume
that $f$ has an inversion along a  hyperplane  $H$ of $X$. Then either $f$ is
elliptic on $X'$ and the set of its fixed point is contained in the
subcomplex  $H\subset X'$, or $f$ is hyperbolic on
$X'$ and all the axes of $f$ are inside $H$.

\end{rem}

\section{Applications.}
\label{sec:application}

We now prove Theorem~\ref{thm:distortednopropercube} of the Introduction.

\begin{proof}
So let $a\in\Gamma$ denote an infinite order element such that $\frac{\size{a^n}}{n}\to 0$. Assume $\Gamma$ acts on a $CAT(0)$ cube complex $X$. We claim that $a$ has a fixed point in $X$, so that the action of $\Gamma$ is not proper.

By Corollary~\ref{cor:delta} we have $\delta(a^n)=n\delta(a)$.

Now for any decomposition $g=s_1\dots s_k$ we clearly have $\delta(g)\le \sum_{i=1}^{i=k}\delta(s_i)$. Consider a geodesic decomposition ${a^n}=s_1\dots s_{k_n}$ on the finite set  $S$ of generators of $\Gamma$. We deduce $\delta(a^n)\le {k_n}\max_{s\in S}\delta(s)$. Since $\lim\frac{k_n}{n}=0$ and $\delta(a^n)=n\delta(a)$ we have $\delta(a)=0$, which concludes the proof.

\end{proof}

\def\polhk#1{\setbox0=\hbox{#1}{\ooalign{\hidewidth
  \lower1.5ex\hbox{`}\hidewidth\crcr\unhbox0}}} \def\cprime{$'$}
  \def\cprime{$'$} \def\polhk#1{\setbox0=\hbox{#1}{\ooalign{\hidewidth
  \lower1.5ex\hbox{`}\hidewidth\crcr\unhbox0}}}

\end{document}